\newtheorem{lem}{Lemma}[section]
\newtheorem{thm}[lem]{Theorem}
\newtheorem*{thm*}{Theorem}
\newtheorem{cor}[lem]{Corollary}
\newtheorem{prop}[lem]{Proposition}
\theoremstyle{definition}
\newtheorem{df}[lem]{Definition}
\newtheorem{remark}[lem]{Remark}
\newtheorem{question}[lem]{Question}
\theoremstyle{remark}
\newtheorem*{remarks*}{Remarks}
\newtheorem*{note*}{Note}
\newcommand{\accapo}{\(\rule[-25pt]{0pt}{25pt}\)\noindent}
\newcommand{\accapino}{\(\rule[-10pt]{0pt}{10pt}\)\noindent}
\newcommand{\N}{\mathbb{N}}
\newcommand{\Z}{\mathbb{Z}}
\newcommand{\Q}{\mathbb{Q}}
\newcommand{\R}{\mathbb{R}}
\newcommand{\C}{\mathbb{C}}
\newcommand{\proj}{\mathbb{P}}
\newcommand{\Dcal}{\mathcal{D}}
\newcommand{\Lcal}{\mathcal{L}}
\newcommand{\Ocal}{\mathcal{O}}
\newcommand{\cO}{\mathcal{O}}
\newcommand{\Xcal}{\mathcal{X}}
\DeclareMathOperator{\Pic}{\textup{Pic}}
\DeclareMathOperator{\NS}{\textup{NS}}
\DeclareMathOperator{\codim}{\textup{codim}}
\DeclareMathOperator{\Ker}{\textup{Ker}}
\DeclareMathOperator{\Hom}{\textrm{Hom}}
\DeclareMathOperator{\End}{\textrm{End}}
\newcommand{\simsigma}{\widetilde{\sigma}}
\newcommand{\simvarphi}{\widetilde{\varphi}}
\newcommand{\simS}{\widetilde{S}}
\newcommand{\simeta}{\widetilde{\eta}}
\newcommand{\virgolette}{``}
\title{Lefschetz Defect in Families}
\author{Matteo Verni}
\address{Sorbonne Université, Université Paris Cité, CNRS, IMJ-PRG, F-75005 Paris, France}
 \email{{\tt matteo.verni@imj-prg.fr}}
\begin{document}
\begin{abstract}
     We provide a cohomological characterization of the Lefschetz defect of smooth complex projective varieties. As a consequence, we deduce that the Lefschetz defect of a smooth Fano variety is invariant under smooth deformation. We also characterize the Lefschetz defect of an abelian variety in terms of its isogeny factors.
 \end{abstract}
 
\maketitle
 
\section{Introduction}

In \cite{cas2012}, the author introduces the \textit{Lefschetz defect} \(\delta_X\) (see Definition \ref{df_defect}) for \(X\) a smooth Fano variety. It  measures how many \virgolette special" steps occur in runs of the MMP of \((X,-D)\) for all effective divisors \(D\). This invariant has proven useful in the effort to classify Fano fourfolds: for the latest developments in this direction, see \cite{cas2024}. 

Since there are finitely many deformation families of Fano manifolds in each dimension, to classify these varieties one is particularly interested in those invariants that are preserved under deformation. One is then naturally led to ask whether the Lefschetz defect is such an invariant. Previously, this had been established in the case \(\delta_X=2\) and \(\rho_X=3\) as an indirect consequence of the classification result of \cite{Secci2023}.

In the present work, our main result is that the Lefschetz defect is indeed deformation invariant.

\begin{thm}\label{thm main}
Let \(X_1\) and \(X_2\) be two smooth Fano varieties which are deformation equivalent via a smooth deformation. Then \(\delta_{X_1}=\delta_{X_2}.\)
\end{thm}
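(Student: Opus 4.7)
The plan is to apply the cohomological characterization of the Lefschetz defect (established earlier in the paper for arbitrary smooth complex projective varieties) and exploit the topological rigidity of Fano varieties in smooth families.

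For a smooth Fano \(X\), Kodaira vanishing gives \(H^i(X,\Ocal_X)=0\) for \(i\geq 1\), hence \(H^2(X,\C)=H^{1,1}(X)\), \(N^1(X)_\R=H^2(X,\R)\), and \(\rho_X=b_2(X)\). I expect the characterization to then recast \(\delta_X\) as the maximum, over prime divisors \(D\subset X\) with chosen resolution \(\widetilde D\to D\), of
\[
\dim\ker\bigl(H^2(X,\Q)\to H^2(\widetilde D,\Q)\bigr),
\]
a purely topological invariant rather than a genuinely Hodge-theoretic one.

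Let \(\pi:\Xcal\to B\) be a smooth projective morphism over a connected base with Fano fibers, realizing \(X_1\) and \(X_2\) as two fibers, so that \(R^2\pi_*\Q\) is a local system on \(B\). For any dominant irreducible component of the relative Hilbert scheme of effective divisors on \(\Xcal/B\), after a finite base change \(U\to B\) and simultaneous resolution one obtains a smooth family \(\widetilde\Dcal\to U\) and a natural map of local systems
\[
(R^2\pi_*\Q)\big|_U\longrightarrow R^2\pi_{\widetilde\Dcal,\,*}\,\Q
\]
whose kernel is a sub-local-system of locally constant rank. Thus \(\dim\ker(H^2(X_t,\Q)\to H^2(\widetilde D_t,\Q))\) is constant along \(U\); taking the maximum over the (countably many) relevant components yields upper semicontinuity of \(t\mapsto \delta_{X_t}\) on \(B\).

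The main obstacle is upgrading upper semicontinuity to equality: a prime divisor in a special fiber need not deform to a prime divisor of the same defect in generic fibers, as it may become reducible or non-reduced. Here the Fano hypothesis is decisive: \(H^2(\Ocal_X)=0\) ensures that every line bundle class on a fiber extends across the family, so numerical divisor classes form a local system on \(B\); combined with the constancy of the Mori cone in smooth families of Fanos (after Wisniewski), this should imply that every effective class on a special fiber remains effective on nearby fibers, so the maximum in \(\delta_X\) is attained on classes that live in dominant components of the Hilbert scheme. Together with the symmetry of deformation equivalence (any specialization can be reversed), this upgrades upper semicontinuity to constancy on connected \(B\) and yields \(\delta_{X_1}=\delta_{X_2}\).
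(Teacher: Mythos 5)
Your skeleton (cohomological characterization, spreading divisors over the family, local constancy, then symmetry) matches the paper's, but you pick the wrong form of the characterization, and that choice is where the gaps open up. The paper's key tool is Corollary \ref{cor_another_N1defect}: for Fano \(X\), \(\delta_X(D)=\dim\Ker\bigl(H^2(X,\Q)\xrightarrow{\smallsmile[D]}H^4(X,\Q)\bigr)\), which depends only on the class \([D]\in H^2(X,\Q)\) and the cup-product structure. Since both are flat in the local system \(R^\bullet\pi_*\Q\) of a smooth family, the kernel dimension is automatically locally constant once the class \([\Dcal_s]\) is flat — no geometry of \(D\) or of a resolution is needed. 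You instead use the restriction-to-resolution form \(\dim\ker\bigl(H^2(X,\Q)\to H^2(\widetilde D,\Q)\bigr)\), which forces you to deform \(D\) itself as a subvariety and to produce a \emph{simultaneous} resolution \(\widetilde{\Dcal}\to U\) so that \(R^2\pi_{\widetilde\Dcal,*}\Q\) is a local system. That exists only after shrinking to a dense open subset of \(U\) by generic smoothness, and at the special points you care about (the fibers \(X_1,X_2\)) the fiber of \(\widetilde{\Dcal}\) need not be a resolution of \(D_t\), so the claimed constancy of the kernel "along \(U\)" is exactly what fails where you need it. This machinery is avoidable, and the paper avoids it.

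The second gap is in your effectivity step. Wi\'sniewski's constancy of the Mori/nef cone in smooth Fano families concerns curves and does not give that an effective divisor class on a special fiber stays effective nearby: the pseudo-effective cone of divisors is dual to the movable cone of curves, not to the Mori cone. The statement you need is the deformation invariance of \(h^0(\Xcal_s,\Lcal_{|\Xcal_s})\) for line bundles on smooth Fano families, due to Siu, de Fernex--Hacon and Totaro (Theorem \ref{invariance_eff} in the paper), combined with the extension of the line bundle itself across the family after a surjective base change (Theorem \ref{spreadPic}). With those two inputs and the cup-product characterization, one never needs the relative Hilbert scheme, the dominant-component bookkeeping, or the semicontinuity-plus-reversal argument: one flat class \([\Dcal]\), effective in every fiber, realizes \(\delta_{X_1}\) as \(\delta_{X_2}(D_2)\le\delta_{X_2}\) directly, and symmetry finishes the proof.
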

It follows that the strong characterization results of \cite{cas2012}, \cite{cas_dru}, \cite{cas_romano_secci2022} and \cite{cas2024} hold throughout each deformation family. 

On the way to the proof we make the following remark of independent interest (see Section \ref{subsec_basic} for definitions).
\begin{prop}\label{prop_invariance_rat_eq}
For any linearly equivalent prime divisors \(D\) and \(D'\) on a smooth projective variety, we have
\[\delta_X(D)=\delta_X(D').\]
\end{prop}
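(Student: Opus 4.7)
The plan is to interpolate between $D$ and $D'$ by a pencil of linearly equivalent divisors and show that $\delta_X(\cdot)$ is constant along the pencil, using the cohomological characterization of the Lefschetz defect established in Section \ref{subsec_basic}.

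First, I set $L = \Ocal_X(D) \cong \Ocal_X(D')$, choose sections $\sigma, \sigma' \in H^0(X, L)$ with zero loci $D$ and $D'$ respectively, and form the pencil $\Pcal \cong \proj^1 \subset |L|$ spanned by $\sigma$ and $\sigma'$. The universal divisor $\Dcal \subset X \times \Pcal$ is flat over $\Pcal$, with $\Dcal_0 = D$, $\Dcal_\infty = D'$, and $\Dcal_t$ a general element of $|L|$ for $t$ in a Zariski open $U \subseteq \Pcal$.

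I then apply the cohomological characterization to reinterpret $\delta_X(\Dcal_t)$ as the dimension of the kernel (or cokernel) of a linear map between cohomology groups naturally attached to the pair $(X, \Dcal_t)$. On $U$, lower semicontinuity of rank for families of linear maps gives that the function $t \mapsto \delta_X(\Dcal_t)$ is constant.

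The main obstacle is to extend this equality to the special fibers corresponding to $D$ and $D'$, which a priori might be reducible, non-reduced, or singular. I expect the cohomological characterization to be intrinsic enough that the relevant linear map depends only on the class of $\Dcal_t$ in $\Pic(X)$, which is constant in the family by construction; if additional work is required, I would pass to a log resolution of $(X \times \Pcal, \Dcal)$ and invoke a specialization argument in cohomology (for instance via limit mixed Hodge structures or via the decomposition theorem applied to $\Dcal \to \Pcal$) to propagate constancy from $U$ to all of $\Pcal$. In either case one concludes $\delta_X(D) = \delta_X(D')$.
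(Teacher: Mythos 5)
There is a genuine gap, and it sits exactly where you locate ``the main obstacle.'' The characterization available from Section \ref{subsec_basic} is (\ref{eq N1 formula}), namely $\delta_X(D)=\dim\Ker\big(N^1(X)_\Q\to H^2(D,\Q)\big)$, whose target $H^2(D,\Q)$ depends on the topology of the divisor itself, not merely on its class in $\Pic(X)$ or $H^2(X,\Q)$. Your closing hope that ``the relevant linear map depends only on the class of $\Dcal_t$ in $\Pic(X)$'' is therefore precisely the statement to be proved: if such a characterization were already available, the pencil would be superfluous, since $D$ and $D'$ have the same class by hypothesis. The paper establishes exactly this in Section \ref{subsec_another_char}: the generalized Voisin lemma (Proposition \ref{prop_generalized_voisin_lemma}, proved via reduction to a surface section, the Hodge index theorem on a resolution, LHT for singular spaces, and Deligne's theorem on kernels of pullbacks) yields $\delta_X(D)=\dim\Ker\big(N^1(X)_\Q\xrightarrow{\smallsmile[D]}H^4(X,\Q)\big)$ (Corollary \ref{cor_another_N1defect}), from which the proposition is immediate. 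None of that input appears in your argument, and it is not soft.

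The degeneration step also fails quantitatively. Over the open set $U$ where $\Dcal\to\Pcal$ is topologically locally trivial the defect is indeed constant, but at the special fibers the comparison goes only one way: a neighbourhood $T$ of $\Dcal_0=D$ retracting onto $D$ contains $\Dcal_t$ for $t$ near $0$, so the restriction $H^2(X,\Q)\to H^2(\Dcal_t,\Q)$ factors through $H^2(T,\Q)\cong H^2(D,\Q)$, giving $\Ker\big(H^2(X,\Q)\to H^2(D,\Q)\big)\subseteq\Ker\big(H^2(X,\Q)\to H^2(\Dcal_t,\Q)\big)$, i.e.\ $\delta_X(D)\le\delta_X(\Dcal_t)$, and likewise for $D'$. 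Constancy on $U$ therefore yields only $\delta_X(D)\le\delta_{\mathrm{gen}}$ and $\delta_X(D')\le\delta_{\mathrm{gen}}$, not equality; neither limit mixed Hodge structures nor the decomposition theorem for $\Dcal\to\Pcal$ reverse this inequality by themselves (note that $R^2\pi_*\Q$ is only constructible, not locally constant, near $0$ and $\infty$). The missing reverse inequality is the actual content of the proposition, and in the paper it is exactly what Proposition \ref{prop_generalized_voisin_lemma} supplies.
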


In the second part of the paper, we begin to study the Lefschetz defect of smooth projective varieties other than Fano. The second main result precisely relates the Lefschetz defect of an abelian variety with its isogeny factors and their multiplicity. We denote two abelian varieties \(A,A'\) being isogenous by \(A\sim A'\).

\begin{thm}\label{thm_defect_of_abelians}
Let \(A\) be an abelian variety of dimension \(n\). Then \(\delta_A=0\) if and only if \(A\) does not contain elliptic curves nor abelian surfaces of Picard number greater than 1. If instead \(\delta_A>0\), then exactly one of the following is verified:
\begin{itemize}
\item \(A\sim C \times E\) where \(E\) is an elliptic curve, and if \(k_E \geq 1\) is its multiplicity as an isogeny factor of \(A\) then \[\delta_A=\begin{cases}
k_E & \mathrm{ \ if \ } E \textrm{ does not have CM }\\
2k_E -1 & \mathrm{ \ if \ } E \textrm{ has CM}
\end{cases}.\]
\item \(A\sim C\times S\) where \(S\) is a simple abelian surface of type II, \(\rho_S=3\) and \[\delta_A=2.\]
Furthermore, any elliptic curve showing up as an isogeny factor of \(A\) does so with multiplicity at most one.
\item \(A\sim C\times S'\) where \(S'\) is a simple abelian surface of type I or IV, \(\rho_{S'}=2\) and \[\delta_A=1.\]  Furthermore, there are no elliptic curves contained in \(A\).
\end{itemize} 

\end{thm}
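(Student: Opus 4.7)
Combining Proposition~\ref{prop_invariance_rat_eq} with the fact that translations on an abelian variety act trivially on $N_1(A)$ and $N^1(A)$, the invariant $\delta_A(D)$ depends only on the N\'eron--Severi class $[D]\in \NS(A)_\Q$. Using the cohomological characterization of the defect from the first part of the paper together with the projection formula, I identify
\[
\delta_A(D)\;=\;\dim_\Q \ker\bigl(\NS(A)_\Q \longrightarrow \NS(D)_\Q\bigr),
\]
a quantity invariant under isogenies of $A$. Poincar\'e complete reducibility then allows me to assume that $A$ is a product $\prod_i B_i^{n_i}$ of powers of pairwise non-isogenous simple abelian varieties, and to exploit the decomposition
\[
\NS(A)_\Q \;\cong\; \bigoplus_i \NS(B_i^{n_i})_\Q \;\oplus\; \bigoplus_{i<j}\Hom\bigl(B_i^{n_i},(B_j^{n_j})^\vee\bigr)_\Q.
\]

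I next exhibit divisors realizing the claimed values. If $A$ has an elliptic factor $E$ of multiplicity $k_E$, write $A\sim E^{k_E}\times A'$ with $E$ absent from $A'$ and take $D:=E^{k_E-1}\times A'$, a codimension-one abelian subvariety. The restriction $\NS(A)_\Q\to \NS(D)_\Q$ then splits as the surjection $\NS(E^{k_E})_\Q \twoheadrightarrow \NS(E^{k_E-1})_\Q$ direct-summed with the identity on $\NS(A')_\Q$, and so
\[
\delta_A(D)\;=\;\rho(E^{k_E})-\rho(E^{k_E-1}),
\]
which equals $k_E$ if $E$ has no CM (using $\rho(E^k)=\binom{k+1}{2}$) and $2k_E-1$ if $E$ has CM (using $\rho(E^k)=k^2$). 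If instead $A$ has a simple abelian surface factor $S$ with $\rho_S>1$, write $A\sim S\times A'$ with $S$ absent from $A'$ and take $D := D_S\times A'$ for some smooth ample curve $D_S\subset S$. The intersection-with-$D_S$ map $\NS(S)_\Q\to \Q$ has image of rank one by ampleness, while the cross-term $\Hom(S,A'^\vee)_\Q$ injects into $\Hom(J(D_S),A'^\vee)_\Q$ via the Albanese, giving $\delta_A(D)=\rho_S-1$; this is $2$ for type II and $1$ for types I, IV.

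It remains to show that these constructions are optimal and that the three scenarios are mutually exclusive. Given a prime divisor $D$ with $\delta_A(D)>0$ realized by some $L\in\NS(A)_\Q$ with $L|_D\equiv 0$, I decompose $L$ along the K\"unneth sum above: components in $\NS(B_i^{n_i})_\Q$ where $B_i$ is simple of dimension $\geq 3$, or is an abelian surface with $\rho_{B_i}=1$, must vanish by a rigidity argument, and the $\Hom$-cross-terms contribute kernel only through the two mechanisms already constructed. This organizes the positive-$\delta_A$ case into the three scenarios with the stated multiplicity and Picard constraints, and simultaneously yields the characterization $\delta_A=0$. The main obstacle, which I expect to be the most delicate, is the mixed scenario $A\sim E^{k_E}\times S\times A''$ with both an elliptic factor and a qualifying surface factor: one must rule out that elliptic and surface contributions can combine additively, which I expect to follow from a see-saw argument forcing any such $L$ to be supported, up to isogeny, in a single simple factor of the Poincar\'e decomposition.
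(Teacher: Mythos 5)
Your lower--bound constructions are correct and essentially coincide with what the paper obtains: the divisor $E^{k_E-1}\times A'$ gives $\rho(E^{k_E})-\rho(E^{k_E-1})$, which is $k_E$ or $2k_E-1$ according to CM, and an ample curve in a simple surface factor gives $\rho_S-1$. The reduction to a cohomological/numerical kernel and the isogeny invariance are also fine. The gap is in the upper bound, which is the actual content of the theorem. Your plan is to take an arbitrary prime divisor $D$, take $L$ in the kernel of restriction, and kill its K\"unneth components one by one by a ``rigidity argument,'' with the mixed case $A\sim E^{k_E}\times S\times A''$ deferred to an unspecified ``see-saw argument.'' Neither argument is supplied, and decomposing $L$ along $\NS(A)_\Q\cong\bigoplus_i\NS(B_i^{n_i})_\Q\oplus\bigoplus_{i<j}\Hom(\cdot,\cdot)_\Q$ cannot by itself work: the kernel of $\smallsmile\,[D]$ (equivalently of restriction to $D$) depends on the class $[D]$, not on the isogeny type of $A$, and for an arbitrary prime divisor you have no control over how $[D]$ sits with respect to that decomposition. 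In particular nothing in your sketch rules out a single prime divisor whose kernel collects contributions from several simple factors at once, which is exactly the scenario you flag as the ``main obstacle.''

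The missing idea is the structure theory of effective divisors on abelian varieties, which is what the paper's Proposition~\ref{prop defect of D abelian} exploits. For any effective $D$, the system $|2D|$ is basepoint--free, and the Stein factorization $A\xrightarrow{\varphi}B\xrightarrow{\psi}\proj(|2D|)$ has $B$ abelian of dimension $\kappa(A,\Ocal_A(D))$ with $\psi$ finite; hence $[2D]=\varphi^*$ of an ample class on $B$. Poincar\'e reducibility then writes $A\sim C\times T$ with $T\sim B$ and $[2D]$ pulled back from an ample class $[N]$ on $T$, and Hard Lefschetz for $(T,[N])$ computes $\ker(\smallsmile\,[D])$ summand by summand: it is zero if $\dim B\geq 3$, equals $\rho_B-1$ if $\dim B=2$, and gives the CM dichotomy if $\dim B=1$. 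This shows that the defect of any \emph{single} effective divisor is governed by a \emph{single} quotient abelian variety $B$, of dimension $1$ or $2$ whenever the defect is positive --- which is precisely what forecloses the ``additive combination'' of elliptic and surface contributions, and from which the trichotomy, the precedence of the elliptic case, and the ``furthermore'' clauses all follow by comparing the values the competing quotients would produce. Without this input (or an equivalent one, e.g.\ the semipositivity of the Hermitian form of an effective line bundle and its degeneracy subtorus), your optimality and exclusivity claims are unproved.
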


This result can also be seen as one possible way of addressing the problem brought up in the introduction of \cite[Section 8]{cas2022}, namely to find splitting results in the spirit of \cite[Th. 3.3.]{cas2012},  outside the smooth Fano setting. 
This was done for mildly singular Fano varieties in \cite{Della_Noce_2012}, and our result provides an instance completely outside the Fano context.

\subsection{Outline of the article}

In Section \ref{sec_defect} we recall basic definitions and remarks about the Lefschetz defect. In Section \ref{subsec_another_char} we prove a generalization  (Proposition \ref{prop_generalized_voisin_lemma}) of \cite[Lemme 1.5]{voisin1992}, from which we deduce that \(\delta_X(D)\) depends only on the cohomological class \([D]\) and not on \(D\) itself. This provides a purely cohomological characterization of \(\delta_X\) to be used throughout, and it proves Proposition \ref{prop_invariance_rat_eq}.

In Section \ref{sec_Fano_in_families} we collect the supporting results and give the proof of Theorem \ref{thm main}: the necessary ingredients are Corollary \ref{cor_another_N1defect} and a result on deformation invariance of the cone of effective divisors, which was established at increasing levels of generality in the works \cite{Siu2002}, \cite{deFernex_Hacon2011} and \cite{Totaro2012}.

In Section \ref{sec_defect_abelian} we make the link between Lefschetz defect and isogeny factors of an abelian variety, and prove Theorem \ref{thm_defect_of_abelians}. 

\subsection*{Acknowledgements}
I would first like to thank my advisors Claire Voisin and Emanuele Macrì, for sharing their knowledge and for their invaluable advice on how to write an article. In particular, I thank Claire Voisin for the suggestion of adapting her result of \cite{voisin1992} to the study of \(\delta_X\). I would like to thank Saverio Secci for explaining to me the previously known case of the invariance result, and also Angel Rios-Ortiz and Enrico Lampetti for helpful conversations.

This work was supported by the ERC Synergy Grant HyperK (Grant agreement No. 854361)

\section{Lefschetz Defect}\label{sec_defect}
\subsection{Basic definitions and remarks}\label{subsec_basic}

In this section, \(X\) will always denote a smooth projective variety, if not specified otherwise. We first recall general definitions and remarks, and then establish the characterization (\ref{eq another N1 defect}).
\accapino

A \textit{prime} divisor is by definition the Weil divisor defined by one integral closed subvariety of codimension one. 

For any algebraic variety \(Y\), \(N_1(Y)_\Q\) denotes the \(\Q\)-vector space generated by all integral curves \(C \subset Y\) modulo numerical equivalence. Proper morphisms \(Y \rightarrow Z\) functorially induce linear maps \(N_1(Y)_\Q \rightarrow N_1(Z)_\Q\) in the obvious way. 

\begin{df}\label{df_defect}
Let \(i: D \hookrightarrow X\) be the embedding of a prime divisor.
\noindent We define \[\delta_X(D):= \codim_{N_1(X)_{\Q}} i_* N_1(D)_{\Q}.\] 

We define the \textbf{Lefschetz defect of \(X\)} as the number
\begin{equation}\label{eq_definition_delta_X}
    \delta_X := \max_{\substack{D\subset X \\\textrm{ prime}}} \delta_X(D).
\end{equation}
\end{df}
\noindent For a detailed survey on the Lefschetz defect and its applications, see \cite{cas2022}.

We observe that we could have taken the maximum in (\ref{eq_definition_delta_X}) over all (possibly nonreduced, possibly reducible) effective divisors. Firstly, for any nonreduced divisor
\(D\), one has \(\delta_X(D)=\delta_X(D_{red})\).
Indeed, when computing \(\dim_\Q i_* N_1(D)_\Q\) we are considering integral curves inside \(X\): since the embedding of a reduced subscheme only depends on the topological embedding, the class of a curve \(C \subset D\) does not care about the nonreduced structure of \(D\), the only thing that matters is the topological subspace \(|C| \subset |D| \subset |X|\). 
Moreover, if \(D=D_1 \cup D_2\), one sees that \(\delta_X(D)\leq \delta_X(D_1)\) since all curves lying on \(D_1\) are also lying on \(D\). 

We conclude that
\begin{equation}\label{remark alldivisors}
\delta_X=\max_{\substack{D\subset X \\\textrm{ effective}}} \delta_X(D).
\end{equation}

Denoting the Picard rank of \(X\) by \(\rho_X\), we remark that
\begin{equation}\label{range_for_defect}
0\leq \delta_X \leq \rho_X -1 .\end{equation}
Indeed, \(\delta_X \neq \rho_X\) since any integral curve \(C\) on a divisor \(D\) is intersected nontrivially by hyperplane sections of \(X\), so \([C]\) represents a nontrivial class of \(i_* N_1(D)_\Q\).

Another situation where \(\delta_X(D)\) is easily computed is the following.
Let \(D\subset X\) be as above and suppose there exists a retraction \(X \rightarrow D\), i.e., a morphism of varieties which is a left-inverse to the inclusion morphism \(D \hookrightarrow X\). Then \(X\rightarrow D\) induces on numerical \(1\)-cycles a retraction to the map \(N_1(D,\Q) \rightarrow N_1(X,\Q)\), which then must be injective. We conclude that
\begin{equation}\label{rk retraction}
\delta_X(D)=\rho_X - \rho_D.
\end{equation}
\begin{remark}\cite[Remark 3]{cas2022}
For any complex projective variety \(X\), \(N_1(X)^\vee_\Q\) 
is naturally identified with \(N^1(X)_{\Q}\). Moreover the latter injects into \(H^2(X,\Q)\),
as homological equivalence of divisors is equivalent to numerical equivalence up to torsion.
Thus we obtain a more cohomological formulation of \(\delta_X(D)\), namely
\begin{equation}\label{eq N1 formula}
    \delta_X(D)= \dim  \Ker \big(\eta^* : N^1(X)_\Q \rightarrow H^2(D,\Q)\big).
\end{equation}

Note also that in case \(H^2(X,\Ocal_X)=0\), we have the diagram

\[\begin{tikzcd}
	{N^1(X)_\Q} & {N^1(D)_\Q} \\
	{H^2(X,\Q)} & {H^2(D,\Q)}
	\arrow[from=1-1, to=1-2]
	\arrow["\sim"', from=1-1, to=2-1]
	\arrow[hook, from=1-2, to=2-2]
	\arrow[from=2-1, to=2-2]
\end{tikzcd},\]
which implies 

\begin{equation*}
\delta_X(D)= \textrm{dim}  \Ker \big(\eta^* : H^2(X,\Q)\rightarrow H^2(D,\Q)\big).\end{equation*}
\end{remark}

\subsection{Another cohomological characterization of \(\delta_X(D)\)}\label{subsec_another_char}

For \(X\) a compact complex manifold, recall that there exists a \textbf{cycle class map}
\[CH^k(X)\rightarrow H^{2k}(X,\Q)\]
\[Z\mapsto [Z]\]

Recall that for any closed analytic subvariety \(\sigma : V\hookrightarrow X\), \([V]\) can be defined via any resolution of singularities \(\widetilde{V} \xrightarrow{\tau} V\) by pushing forward the fundamental class. More precisely, setting \(\overset{\sim}{\sigma}:=\sigma\circ \tau\)
\[[V]=\sigma_* (\underline{1}_{\overset{\sim}{Z}})=PD_X^{-1}\circ (\overset{\sim}{\sigma}^*)^\vee \circ PD_{\overset{\sim}{Z}}(\underline{1}_{\overset{\sim}{Z}})\]
where \(PD_X : H^k(X,\Q)\rightarrow H^{2n-k}(X,\Q)^\vee\) are Poincaré duality morphisms and \[\underline{1}_{\overset{\sim}{Z}} \in H^0(\overset{\sim}{Z},\Q)\simeq \Q\] is the generator associated to a given choice of orientation.
\accapino

Remarkably, the subspace of cohomology classes on a K\"ahler manifold \(X\) that are trivial when restricted to a submanifold \(V\) depends only on the cohomology class \([V]\). More precisely, one has
\begin{prop}\cite[Lemme 1.5]{voisin1992}\label{prop_voisin1992}
Let \(Y\) be a K\"ahler manifold and \(W\) a submanifold of codimension \(c\). Then 

\begin{small}\begin{equation*}\label{another_characterization}
\Ker\Big(H^2(Y,\R)\xrightarrow{\sigma^*}H^2(W,\R)\Big)=
\Ker\Big(H^2(Y,\R) \xrightarrow{\smallsmile [W]} H^{2+2c}(Y,\R)\Big).
\end{equation*}
\end{small}
\end{prop}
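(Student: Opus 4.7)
The plan is to prove the two inclusions separately. The inclusion $\Ker(\sigma^*)\subseteq \Ker(\smile [W])$ is purely formal: since $[W]=\sigma_*(1_W)$, the projection formula gives
\[\alpha\smile [W] = \alpha\smile \sigma_*(1_W) = \sigma_*(\sigma^*\alpha)\]
for every $\alpha\in H^2(Y,\R)$, so $\sigma^*\alpha=0$ implies $\alpha\smile [W]=0$.

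For the reverse inclusion I would assume $\alpha\smile [W]=0$ and set $\beta:=\sigma^*\alpha\in H^2(W,\R)$, aiming to show $\beta=0$. Writing $n:=\dm_\C Y$, so that $\dm_\C W=n-c$, the cases $n-c\leq 1$ can be dispatched directly via the identity $\int_W \sigma^*\alpha=\int_Y \alpha\smile [W]$, so I would focus on $n-c\geq 2$. Here the key tool is the Hodge--Riemann bilinear form on $H^2(W,\R)$: picking a Kähler class $\omega$ on $Y$ and setting $\omega_W:=\sigma^*\omega$ (Kähler on $W$), define
\[\langle \eta_1,\eta_2\rangle := \int_W \eta_1\smile \omega_W^{n-c-2}\smile \eta_2.\]
By the Hodge--Riemann bilinear relations this form is nondegenerate on $H^2(W,\R)$, and moreover its restriction to any real sub-Hodge structure $V\subseteq H^2(W,\R)$ containing $\omega_W$ remains nondegenerate: the Lefschetz decomposition $V=\R\omega_W\oplus V_{\mathrm{prim}}$ is orthogonal for $\langle \cdot,\cdot\rangle$, and the form is definite, with a fixed sign, on each Hodge piece $V_{\mathrm{prim}}^{p,q}$. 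The image $\sigma^*H^2(Y,\R)$ is precisely such a $V$: it is a sub-Hodge structure since $\sigma^*$ is a morphism of pure Hodge structures, and it contains $\omega_W=\sigma^*\omega$.

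To conclude, it suffices to check that $\beta$ is Hodge--Riemann orthogonal to every element of $\sigma^*H^2(Y,\R)$. Using $\omega_W^{n-c-2}=\sigma^*(\omega^{n-c-2})$ and the identity $\int_W \sigma^*\xi=\int_Y \xi\smile [W]$ for $\xi\in H^{2(n-c)}(Y,\R)$, one computes for any $\eta'\in H^2(Y,\R)$:
\[\langle \beta,\sigma^*\eta'\rangle = \int_W \sigma^*\bigl(\alpha\smile \omega^{n-c-2}\smile \eta'\bigr) = \int_Y (\alpha\smile [W])\smile \omega^{n-c-2}\smile \eta' = 0.\]
The main obstacle I anticipate is making rigorous the claim that Hodge--Riemann nondegeneracy is inherited by sub-Hodge structures containing the Kähler class; this is what forces the argument out of a purely topological setting and requires the sign information furnished by the bilinear relations on primitive $(p,q)$-pieces. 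Once that input is in place, the rest of the proof reduces to the single push-pull calculation above.
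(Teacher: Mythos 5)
Your proof is correct. Note first that the paper does not reprove this statement: it cites Voisin's Lemme 1.5 and instead proves a generalization (Proposition \ref{prop_generalized_voisin_lemma}) for locally complete intersection subvarieties, and your argument is essentially Voisin's original one rather than the paper's adaptation of it. The shared core is the same in both: the push--pull identity reduces everything to showing that a Hodge-theoretically controlled bilinear form is nondegenerate on the sub-Hodge structure \(\mathrm{Im}\,\sigma^*\). Where you diverge is in which form you use and on which space. You stay on \(W\) in arbitrary dimension \(d=n-c\) and take the Hodge--Riemann pairing \(\int_W\eta_1\smallsmile\omega_W^{d-2}\smallsmile\eta_2\), getting nondegeneracy on any real sub-Hodge structure containing \(\omega_W\) from the orthogonality of the Lefschetz decomposition and the sign of the form on each primitive real \((p,q)\)-piece; this is exactly the point you flag as delicate, and your sketch of it is right (the key being that the Lefschetz projector \(v\mapsto\bigl(\int_W v\smallsmile\omega_W^{d-1}/\int_W\omega_W^{d}\bigr)\omega_W\) preserves any sub-Hodge structure containing \(\omega_W\), so \(V=\R\omega_W\oplus V_{\mathrm{prim}}\) with the three definite pieces mutually orthogonal). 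The paper, by contrast, first cuts down to a surface by \(d-2\) general hyperplane sections (using the singular Lefschetz hyperplane theorem to keep \(H^2(V)\to H^2(S)\) injective), passes to a resolution \(\widetilde S\), and then needs only the Hodge index theorem for the plain intersection form on \(H^2(\widetilde S,\R)\), with Deligne's theorem \(\Ker\varphi^*=\Ker\simvarphi^*\) to descend; that detour is forced by the singularities and by the need for an ample class, and would not work in the purely K\"ahler setting of the statement here, so your direct route is the appropriate one. One small caveat: the statement (and your \(d\le 1\) dispatch, as well as the use of a single fundamental class and a single Hodge--Riemann form) implicitly requires \(W\) connected --- for a disconnected submanifold the claim fails, e.g.\ for two disjoint exceptional curves \(E_1, E_2\) on a two-point blow-up of \(\proj^2\) with \(\alpha=[E_1]-[E_2]\) --- but that is the standard reading of ``submanifold'' here.
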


Note that this statement fails for cohomology of higher degrees, as explained by Voisin in Appendix B to \cite[Remark B.3]{shenyin2022}.
\accapo

We can in fact prove a generalization of the above proposition which works for certain singular subvarieties. Its proof follows in part that of \cite{voisin1992}, with adequate adjustments.

\begin{prop}\label{prop_generalized_voisin_lemma}
    Let \(X\) be a smooth projective variety over \(\C\) of dimension \(n\) and \(\sigma: V\hookrightarrow X\) a locally complete intersection subvariety of dimension \(d\geq 2\) and codimension \(c=n-d\). 
    Then 
    \begin{small}
    \begin{equation}\label{another_formula}
    \Ker\Big(H^2(X,\Q)\xrightarrow{\sigma^*}H^2(V,\Q)\Big)=\Ker\Big(H^2(X,\Q) \xrightarrow{\smallsmile [V]} H^{2+2c}(X,\Q)\Big).
        \end{equation}
        \end{small}
\end{prop}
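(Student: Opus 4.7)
The plan is to extend Voisin's argument for smooth submanifolds (Proposition~\ref{prop_voisin1992}) to the locally complete intersection setting by passing through a resolution of singularities. Let $\tau: \tilde V \to V$ be such a resolution and $\tilde\sigma := \sigma \circ \tau : \tilde V \to X$; by the formula recalled at the start of this subsection, $[V] = \tilde\sigma_*(\underline{1}_{\tilde V})$.

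The inclusion $\subseteq$ is immediate from the projection formula: if $\sigma^*\alpha = 0$ then $\tilde\sigma^*\alpha = \tau^*\sigma^*\alpha = 0$, so $\alpha \smile [V] = \tilde\sigma_* \tilde\sigma^*\alpha = 0$.

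For the reverse inclusion, suppose $\alpha \smile [V] = 0$. Since $[V]$ has pure Hodge type $(c,c)$ (the pushforward by $\tilde\sigma$ of $1 \in H^0(\tilde V)$ shifts Hodge type by $(c,c)$), the cup product with $[V]$ respects the Hodge decomposition of $H^2(X, \C)$; so does $\sigma^*$. We may thus reduce to $\alpha$ of pure type $(p,q)$ with $p+q=2$, and aim to show $\tilde\sigma^*\alpha = 0$ in $H^2(\tilde V, \C)$. Choose an ample class $L$ on $X$ and let $\tilde L := \tilde\sigma^*L$, which is big and nef on $\tilde V$. The projection formula gives
\[
\int_{\tilde V} \tilde\sigma^*\alpha \smile \overline{\tilde\sigma^*\alpha} \smile \tilde L^{d-2} \;=\; \int_X \alpha \smile \bar\alpha \smile L^{d-2} \smile [V] \;=\; 0.
\]
Combined with primitivity of $\tilde\sigma^*\alpha$ with respect to $\tilde L$---automatic in types $(2,0)$ and $(0,2)$ since $H^{d+1, d-1}(\tilde V) = 0$, and derived from $\int_X \alpha \smile L^{d-1} \smile [V] = 0$ in type $(1,1)$---and the Hodge--Riemann bilinear inequalities (extended to nef and big classes via a small perturbation $\tilde L + \varepsilon A$ with $A$ ample on $\tilde V$), this yields $\tilde\sigma^*\alpha = 0$.

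To conclude, one must descend $\tilde\sigma^*\alpha = 0$ on $\tilde V$ to $\sigma^*\alpha = 0$ on $V$, i.e., establish injectivity of $\tau^*: H^2(V, \Q) \to H^2(\tilde V, \Q)$, equivalently the purity of the mixed Hodge structure on $H^2(V, \Q)$. The main obstacle lies precisely in this descent step: one must justify purity of $H^2(V, \Q)$ for $V$ lci (exploiting the Cohen--Macaulay property), or circumvent it by working on an embedded resolution $\mu: \hat X \to X$ with smooth strict transform $\hat V \hookrightarrow \hat X$, applying Voisin's smooth lemma to $\mu^*\alpha$ and $[\hat V]$, and comparing back to $X$ via $\mu_*$ together with the correction $\mu^*[V] = [\hat V] + (\text{exceptional})$.
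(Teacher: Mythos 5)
There is a genuine gap in your central step. You pull back to a resolution $\tilde V$ and try to run Voisin's argument directly in dimension $d$, using the form $\int_{\tilde V}\beta\smallsmile\bar\beta\smallsmile\tilde L^{\,d-2}$ with $\tilde L=\tilde\sigma^*L$. But $\tilde L$ is only big and nef on $\tilde V$, and the Hodge--Riemann bilinear relations genuinely fail for big and nef classes in dimension $\geq 3$: the perturbation $\tilde L+\varepsilon A$ gives strict definiteness on the $(\tilde L+\varepsilon A)$-primitive part for each $\varepsilon>0$, but in the limit $\varepsilon\to 0$ one only retains \emph{semi}-definiteness (and the primitive subspaces need not even converge, since the Lefschetz operator can drop rank). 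Concretely, for a blow-up $\mu:\hat X\to X$ of a threefold at a point with exceptional divisor $E$, the class $[E]$ is $\mu^*L$-primitive and satisfies $\int E\smallsmile E\smallsmile\mu^*L=0$ without vanishing. So from $\int_{\tilde V}\tilde\sigma^*\alpha\smallsmile\overline{\tilde\sigma^*\alpha}\smallsmile\tilde L^{\,d-2}=0$ you cannot conclude $\tilde\sigma^*\alpha=0$. This is exactly the pitfall the paper flags: the argument \virgolette would not work here if we had not reduced to $d=2$."

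The paper's proof avoids this in two ways that your proposal misses. First, it cuts $V$ down to a surface $S$ by $d-2$ general hyperplane sections; the lci hypothesis enters precisely here, via the Lefschetz hyperplane theorem for singular (lci) spaces, which makes $H^2(V,\Q)\to H^2(S,\Q)$ injective, so it suffices to treat $d=2$. On the resolved surface $\widetilde S$ the intersection form on $H^2$ is intrinsically nondegenerate with signature controlled by the Hodge index theorem, and its restriction to $\mathrm{Im}\,\tilde\eta^*$ is nondegenerate because that image contains the class $\tilde\eta^*h$ of positive self-intersection --- no Hodge--Riemann statement for big and nef classes in higher dimension is needed. Second, the descent step you identify as \virgolette the main obstacle" is in fact the routine part: one does not need injectivity of $\tau^*:H^2(V,\Q)\to H^2(\widetilde V,\Q)$ (i.e.\ purity of $H^2(V)$, which is doubtful for general lci $V$ and which you leave unproved), but only the much weaker statement that $\Ker(\varphi^*)=\Ker(\widetilde\varphi^*)$ for classes coming from the smooth projective ambient $X$; this is Deligne's theorem (Hodge III, Prop.~8.2.7) and holds for any proper $V$. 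Your alternative via an embedded resolution and the correction $\mu^*[V]=[\hat V]+(\text{exceptional})$ is too vague to assess and would face the same positivity issues on the exceptional contribution.
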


It follows immediately that we also have the equality
\begin{small}
\begin{equation}\label{eq another N1 formula}
\Ker\Big(N^1(X,\Q)\xrightarrow{\sigma^*}H^2(V,\Q)\Big)=
\Ker\Big(N^1(X,\Q) \xrightarrow{\smallsmile [V]} H^{2+2c}(X,\Q)\Big).
\end{equation}
\end{small}

Using (\ref{eq N1 formula}) we deduce from (\ref{eq another N1 formula}) the following. 
\begin{cor}\label{cor_another_N1defect}
Let \(X\) be a smooth complex projective variety and let \(D\subset X\) be a prime divisor. Then
\begin{equation}\label{eq another N1 defect}
    \delta_X(D)= \dim \Ker\Big(N^1(X,\Q) \xrightarrow{\smallsmile [D]} H^{4}(X,\Q)\Big).
\end{equation}
\end{cor}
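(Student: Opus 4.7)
The plan is a direct combination of two results already in hand, with only a small dimension check at the end. Equation (\ref{eq N1 formula}) expresses
\[\delta_X(D) = \dim \Ker\bigl(\sigma^{*} : N^{1}(X)_{\Q} \to H^{2}(D,\Q)\bigr),\]
where $\sigma : D \hookrightarrow X$ is the inclusion. Equation (\ref{eq another N1 formula}), obtained from Proposition \ref{prop_generalized_voisin_lemma} by restricting the identity of kernels in $H^{2}(X,\Q)$ to the subspace $N^{1}(X,\Q)$, identifies that same kernel with $\Ker(N^{1}(X,\Q) \xrightarrow{\smallsmile [D]} H^{2+2c}(X,\Q))$. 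For a divisor we have $c = 1$ and hence $H^{2+2c}(X,\Q) = H^{4}(X,\Q)$, and stringing the two identifications together produces (\ref{eq another N1 defect}).

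The one verification needed is that Proposition \ref{prop_generalized_voisin_lemma} applies to $V = D$. Its l.c.i.\ hypothesis is automatic: $X$ being smooth, every Weil divisor is Cartier, hence locally the zero locus of a single regular function, so $D$ is a local complete intersection of codimension $1$. The dimension hypothesis $d = n - 1 \geq 2$ becomes $\dim X \geq 3$.

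The only edge case, which I would handle by hand, is $\dim X = 2$. By the reduction in (\ref{remark alldivisors}) I may assume $D$ is integral, so $N_{1}(D)_{\Q} \cong \Q$ is spanned by the class of $D$ viewed as a curve in itself, giving $\delta_{X}(D) = \rho_{X} - 1$. On the cohomological side, $H^{4}(X,\Q) \cong \Q$ and $\smallsmile [D]$ is intersection with the nonzero effective class $[D]$; since $[D]$ pairs nontrivially against any ample class, the map is surjective and its kernel also has dimension $\rho_{X} - 1$, matching the geometric count. Since all the substance is already packaged in Proposition \ref{prop_generalized_voisin_lemma}, the corollary itself presents no genuine obstacle beyond this bookkeeping.
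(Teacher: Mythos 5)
Your proof is correct and is essentially the paper's own argument, which simply chains (\ref{eq N1 formula}) with (\ref{eq another N1 formula}) (the $N^1$-restriction of Proposition \ref{prop_generalized_voisin_lemma}) and sets $c=1$. Your additional check of the hypothesis $d\ge 2$, with the separate elementary treatment of $\dim X=2$, addresses a case the paper passes over in silence, and your verification there is right: both sides equal $\rho_X-1$ for an integral curve $D$ on a surface.
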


In particular, Proposition \ref{prop_invariance_rat_eq} holds.

\begin{remark}\label{rk_thanks_referee}
Note that the above Corollary holds just as well for irreducible but non-reduced divisors, as \(\delta_X(D)=\delta(mD)\) and \(-\smallsmile [mD]=m (- \smallsmile [D])\) for all \(m \in \Z_{>0}\). On the other hand, as was correctly pointed out by an anonymous referee, in Corollary \ref{cor_another_N1defect} one cannot relax the assumption of \(D\) being irreducible; to produce a counterexample one could take \(X\) to be a del Pezzo surface with a conic bundle \(\pi \colon X\to \proj^1\) such that for some \(t\in \proj^1\) the fiber \(\pi^{-1}(t)\) is the union of two (\(-1\))-curves. Then \(\delta_X(\pi^{-1}(t))=\rho_X -2\), while 
\[\dim \Ker \Big( N^1(X,\Q) \xrightarrow{\smallsmile [\pi^{-1}(t)]} H^{4}(X,\Q)\Big)=\rho_X -1.\]

\end{remark}

\noindent For the proof of Proposition \ref{prop_generalized_voisin_lemma} we will use the following results on the Betti cohomology of complex algebraic varieties:

\begin{thm}\label{thm_LHT_singular}
For \(Y\) a locally complete intersection subvariety of \(\proj^N\), and \(H\cap Y\) a general hyperplane section of \(Y\), one has that
\[H^k(Y,\Q) \rightarrow H^k(H\cap Y,\Q)\]
is an isomorphism for \(k < dim(Y)-1\) and injective for \(k= dim(Y)-1\).
\end{thm}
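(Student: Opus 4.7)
The plan is to relate the statement to the vanishing of compactly supported cohomology on the affine complement \(U = Y\setminus (Y\cap H)\), and then to apply an Artin-type vanishing theorem.

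First I would observe that \(U\) is affine: since \(\proj^N \setminus H \cong \A^N\), \(U\) is realised as a closed subvariety of an affine space. As an open subvariety of the lci variety \(Y\), it inherits the property of being lci of pure dimension \(n = \dim Y\). Setting \(Z := Y\cap H\), the long exact sequence of the pair \((Y,Z)\)
\[\cdots \to H^k(Y,Z;\Q) \to H^k(Y;\Q) \to H^k(Z;\Q) \to H^{k+1}(Y,Z;\Q) \to \cdots,\]
combined with the standard identification \(H^k(Y,Z;\Q) \cong H^k_c(U;\Q)\) (valid since \(Y\) is compact and \(Z\subset Y\) closed, by comparison of the pair sequence with the open--closed sequence for compactly supported cohomology), reduces the theorem to the single vanishing statement
\[H^k_c(U;\Q) = 0 \qquad \text{for all } k \leq n-1.\]

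This vanishing is a consequence of Artin's theorem in its perverse formulation: on any complex affine variety, every perverse sheaf \(\Fcal\) satisfies \(H^k_c(U,\Fcal) = 0\) for \(k < 0\). I would apply this to \(\Fcal = \Q_U[n]\), which is perverse precisely because \(U\) is lci of pure dimension \(n\): locally \(U\) is cut out by a regular sequence inside a smooth ambient variety, and this local structure forces the support dimension conditions for the middle perversity both on \(\Q_U[n]\) itself and on its Verdier dual. The Artin vanishing then gives \(H^k_c(U,\Q_U[n]) = 0\) for \(k < 0\), which is the same as \(H^k_c(U;\Q) = 0\) for \(k < n\), as required.

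The main obstacle is the verification that \(\Q_U[n]\) is perverse, which fails for arbitrary singular \(U\) and uses the local-complete-intersection hypothesis in an essential way; it is equivalent to the fact that the constructible dualising complex of an lci variety is concentrated in a single cohomological degree. An alternative route that avoids perverse sheaves altogether is to invoke Hamm's extension of the Andreotti--Frankel theorem, asserting that an affine (more generally, Stein) lci variety of pure dimension \(n\) has the homotopy type of a CW complex of real dimension \(\leq n\), and combine it with a duality argument on \(U\); equivalently, one can cite the stratified Morse theoretic proof of Hamm--Lê of the Lefschetz hyperplane theorem for singular varieties directly.
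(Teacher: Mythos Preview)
Your argument is correct. The reduction via the long exact sequence of the pair to the vanishing \(H^k_c(U;\Q)=0\) for \(k<n\) is clean, and the identification of \(\Q_U[n]\) as a perverse sheaf on the affine lci complement, followed by the compactly-supported Artin vanishing, is a standard and valid way to obtain that vanishing. Your parenthetical remark that the lci hypothesis is exactly what makes \(\Q_U[n]\) perverse (equivalently, that the dualising complex of an lci variety sits in a single degree) is the crux, and it is stated correctly.

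By contrast, the paper gives no argument at all: it simply cites the Lefschetz hyperplane theorem for singular spaces from Goresky--MacPherson's \emph{Stratified Morse Theory}, specialised to the locally complete intersection case. That reference proves the result by Morse-theoretic means, which is essentially the alternative you sketch at the end via Hamm's extension of Andreotti--Frankel. So your main route (perverse sheaves and Artin vanishing) is genuinely different from the cited one, though both are well known; the perverse-sheaf argument has the advantage of being short and purely cohomological once one accepts the perversity of \(\Q_U[n]\), while the Morse-theoretic route gives the stronger homotopy statement and avoids the machinery of the derived category. Either way you have supplied more than the paper does.
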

\begin{proof}
This is the theorem \virgolette LHT for singular spaces" at page 24  of \cite{goresky1988}, applied to the locally complete intersection case.
\end{proof}

\begin{thm}\cite[Proposition 8.2.7]{Deligne1974}\label{thm_Deligne}
Let \(\varphi : Y \rightarrow X \) be a proper morphism with \(X\) smooth projective. Let \(\tau: \widetilde{Y} \rightarrow Y\) be a desingularization and let \(\simvarphi := \varphi \circ \tau :\widetilde{Y} \rightarrow X\). Then for any \(k\), 
\begin{small}\[\Ker \, \Big(\varphi^* : H^k(X,\Q) \rightarrow H^k(Y,\Q) \Big) = \Ker \, \Big( \widetilde{\varphi}^* : H^k(X,\Q) \rightarrow H^k(\widetilde{Y},\Q) \Big)  .\]
\end{small}
\end{thm}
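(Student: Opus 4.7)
The inclusion $\Ker(\varphi^*)\subseteq \Ker(\widetilde{\varphi}^*)$ is immediate since $\widetilde{\varphi}^*=\tau^*\circ\varphi^*$, so the content of the theorem lies in the reverse inclusion. My plan is to deduce it from Deligne's theory of mixed Hodge structures (MHS) on the Betti cohomology of complex algebraic varieties. To begin with, $Y$ is proper: since $\varphi$ is proper and $X$ is projective (hence proper over $\C$), the structural morphism $Y\to \spec(\C)$ is a composition of proper morphisms. Thus $H^k(Y,\Q)$ carries a functorial MHS with weights in $[0,k]$, while $H^k(X,\Q)$ and $H^k(\widetilde{Y},\Q)$ are pure of weight $k$ because $X$ and $\widetilde{Y}$ are smooth projective. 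The pullback maps $\varphi^*$, $\tau^*$ and $\widetilde{\varphi}^*$ are all morphisms of MHS.

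The crucial ingredient I would invoke is the identity
\[\Ker\Big(\tau^*:H^k(Y,\Q)\to H^k(\widetilde{Y},\Q)\Big)=W_{k-1}H^k(Y,\Q),\]
where $W_\bullet$ denotes the weight filtration on $H^k(Y,\Q)$. The inclusion $\supseteq$ is automatic: $\tau^*$ is a morphism of MHS into a target pure of weight $k$, hence it annihilates every piece of strictly lower weight. The reverse inclusion $\subseteq$ is the substantive point, and corresponds to Deligne's construction of $W_\bullet$ via a cubical (or simplicial) hyperresolution extending the single resolution $\tau$, which exhibits $\mathrm{Gr}_k^W H^k(Y,\Q)$ as a subspace of $H^k(\widetilde{Y},\Q)$ through $\tau^*$.

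Granting this identity, the proof concludes swiftly: for $\alpha\in H^k(X,\Q)$ with $\widetilde{\varphi}^*\alpha=0$, we have $\tau^*(\varphi^*\alpha)=0$, so $\varphi^*\alpha\in W_{k-1}H^k(Y,\Q)$. On the other hand, $\varphi^*$ is a morphism of MHS from the pure weight-$k$ space $H^k(X,\Q)$, so by strictness with respect to $W_\bullet$ its image is itself pure of weight $k$ and intersects $W_{k-1}H^k(Y,\Q)$ trivially. Therefore $\varphi^*\alpha=0$. The main obstacle is thus the non-trivial half of the kernel identity above, whose proof genuinely requires the apparatus of cubical hyperresolutions from Théorie de Hodge III and cannot be easily bypassed; a possible alternative route would go through the BBD decomposition theorem applied to $R\tau_*\Q_{\widetilde Y}$, but this is an equally heavy substitute.
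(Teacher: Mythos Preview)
The paper does not provide its own proof of this statement; it is cited verbatim from \cite[Proposition 8.2.7]{Deligne1974} and then invoked as a black box in Step~3 of the proof of Proposition~\ref{prop_generalized_voisin_lemma}. There is therefore nothing to compare your argument against within the paper itself.

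That said, your sketch is a correct account of why the result holds. The reduction to the identity $\Ker(\tau^*)=W_{k-1}H^k(Y,\Q)$ together with strictness of morphisms of mixed Hodge structures is exactly the mechanism underlying Deligne's proposition, and you are right that the nontrivial half of that identity is where the real work of Hodge~III (simplicial or cubical hyperresolutions) enters. One small correction: you claim $\widetilde{Y}$ is smooth projective, but from the hypotheses you only know that $Y$ is proper (as you argue) and that $\tau$ is proper, so $\widetilde{Y}$ is a priori only smooth and proper. This does not damage the argument, since smooth proper varieties over $\C$ still have cohomology pure of the expected weight; but it is worth being precise, as projectivity of $\widetilde{Y}$ is not guaranteed unless $Y$ itself is projective.
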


\begin{proof}[Proof of Proposition \ref{prop_generalized_voisin_lemma}]
Consider a resolution of singularities \(\widetilde{V} \xrightarrow{\tau} V \) 
and denote the induced composite morphism \( \widetilde{V} \rightarrow X\) by \(\simsigma := \sigma \circ \tau\).
One inclusion in (\ref{another_formula}) is easy: by the projection formula, \[\smallsmile [V]=\simsigma_* \circ \simsigma^*=\simsigma_* \tau^* \sigma^* ,\]
thus 
\[\Ker\,  \sigma^* \subset \Ker \, ( \smallsmile [V]).\]

We are left to prove the opposite inclusion, which we break down into steps.

\textit{Step 1:} We first reduce to the case where \(V\) is a surface. Let \(\alpha \in \Ker( \smallsmile [V])\), and let \(h\) be the ample class associated to an ample divisor \(H\). We denote by \(S\) the surface we obtain from \(V\) by taking a general \(H\)-hyperplane section a total of \(d-2\) times. Let us denote by \(j\) the embedding of \(S\) in \(V\), and set \(\eta=\sigma\circ j\). We have \([S]=h^{d-2} \smallsmile [V]\), hence \[\alpha \smallsmile [S]=0.\]

We have the following commutative diagram
\[\begin{tikzcd}
	{H^2(X)} & {H^2(V)} \\
	{H^2(S)}
	\arrow["{\sigma^*}", from=1-1, to=1-2]
	\arrow["{\eta^*}"', from=1-1, to=2-1]
	\arrow["{j^*}", hook, from=1-2, to=2-1]
\end{tikzcd}\]
where \(j^*\) is injective by Theorem \ref{thm_LHT_singular}. In order to conclude that \(\sigma^*\alpha=0\) it thus suffices to prove the implication
\[\alpha \smallsmile [S]=0 \Rightarrow \eta^*\alpha =0\]
which amounts exactly to the \(d=2\) case.

\textit{Step 2:} Let \(\pi: \simS \rightarrow S\) be a resolution of singularities and let \(\simeta:=\eta \circ \pi\). In this step we prove that 
\[\alpha \smallsmile [S] =0 \Rightarrow  \simeta^*\alpha=0.\]
The argument will follow very closely the one of \cite{voisin1992}, which proves Proposition \ref{prop_voisin1992} for any \(d\). However, note that it would not work here if we had not reduced to \(d=2\).

Consider the usual nondegenerate quadratic form on \(H^2(\simS,\Q)\)
\[q(\gamma,\nu):=\int_{\simS} \gamma \smallsmile \nu .\]
For any \(\alpha,\beta \in H^2(X,\Q)\) we have\[q(\simeta^*\alpha,\simeta^*\beta):=\int_{X} \alpha \smallsmile \beta\smallsmile [S],\]
from which we see that \(\alpha \smallsmile [S]=0\) implies \( \simeta^* \alpha \in \Ker q_{|\mathrm{Im} \, \simeta^*}\).
This means it suffices to prove that the restriction \(q_{|Im \, \simeta^*}\) is nondegenerate.

The signature of \(q\) is described by the Hodge index theorem. The real cohomology \(H^2(\widetilde{S},\R)\) decomposes as 
\begin{equation*}\label{Hodge_decomp} H^2(\widetilde{S},\R)=(H^{2,0}(\widetilde{S})\oplus H^{0,2}(\widetilde{S}))_\R \oplus H^{1,1}(\widetilde{S})_\R ,\end{equation*}
where the direct sum is orthogonal for \(q\) and the subscript \(\R\) designates the subspace of real cohomology classes. \(q\) is positive definite on the first subspace and it has signature \((+,-,\dots , -)\) on the second subspace. The real cohomology \(H^2(X,\R)\) has a similar decomposition and the morphism \(\simeta^*\), being a morphism of Hodge structures, preserves such a decomposition. It follows that 
\[\mathrm{Im} \, \simeta^*= \mathrm{Im} \,
\simeta^{*(2,0)+(0,2)} \oplus \mathrm{Im} \, \simeta^{*(1,1)} .\]

Hence it suffices to prove that \(q_{|\mathrm{Im} \, \simeta^{*(2,0)+(0,2)}}\)
and \(q_{|\mathrm{Im} \, \simeta^{*(1,1)}}\) are nondegenerate. For the first one this is obvious because \(q_{(H^{2,0}(\widetilde{S})\oplus H^{0,2}(\widetilde{S}))_\R}\) is positive definite. For the second one, this follows from the fact that the image contains the big and nef class \(\simeta^* h \), which thus has positive self intersection, and from the signature of \(q\) on \(H^{1,1}(\widetilde{S})_\R\).

\textit{Step 3:} We are left to prove the implication 
\[\simeta^* \alpha=0 \Rightarrow \eta^*\alpha =0.\] But this is exactly the content of Theorem \ref{thm_Deligne}, hence we are done.
\end{proof}

\section{Lefschetz defect in families of Fano Manifolds}\label{sec_Fano_in_families}

In this section we focus on the Fano manifold case: it is restricted to this context that the Lefschetz defect has been introduced, because of its interpretation in terms of MMP runs (\cite[Section 3]{cas2022}). The advantage of the Fano hypothesis for our study of the behaviour in families is that the cohomology of these variety is more closely related to its Picard groups; the first varies well with deformations, while the second is the one to which the Lefschetz defect heavily depends on. 

Throughout this section, \(X\) will denote a smooth Fano variety. 
First, note that \(H^2(X,\Ocal_X)=0\), thus \(N^1(X)_\Q=H^2(X,\Q)\). This means (\ref{eq another N1 defect}) becomes
\begin{equation}\label{eq_another_H2_delta}
\delta_X(D)= \dim \Ker\Big(H^2(X,\Q) \xrightarrow{\smallsmile [D]} H^{4}(X,\Q)\Big)
\end{equation}
We consider a \textbf{smooth deformation family of \(X\) over \(B\)} : this means a smooth morphism \(f\colon \Xcal \rightarrow B\), with \(B\) connected, for which the relative canonical is an anti-ample line bundle relatively over the base, making each fiber a smooth Fano variety, together with a chosen point \(0\in B\) such that \(\Xcal_0\simeq X\). We denote such data as an arrow \(f \colon \Xcal \rightarrow (B,0)\).

\begin{question}
How does \(\delta_{\Xcal_b}\) vary with \(b\in B\) ?
\end{question}

For \(\rho_X=1\) or \(\dim X \leq 2\), \(\delta_X\) is trivially deformation invariant. The first is clear from the bounds \(0\leq \delta_X \leq \rho_X -1\) of  \eqref{range_for_defect}, and the fact that \(\rho_X\) stays constant for smooth deformations of varieties with \(h^{2,0}=0\). This also covers the case \(\dim X=1\) since then \(X \simeq \proj^1\). If instead \(X\) is a del Pezzo surface, then any integral curve \(C\) has \(N_1(C)_\R = \R [C]\) hence \(\delta_X= \delta_X(D)=\rho_X-1\)

\subsection{Extending line bundles}
Ideally, we want to spread effective divisors from one fiber to a whole family of effective divisors over the base. This in general is only possible up to basechanging the family: \textit{as long as the basechange is surjective}, this does not pose any problem towards our final goal of comparing specific fibers \(X_t\) and \(X_s\), as we do not care about which parameter space \(t\) and \(s\) live in.

The first step consists in spreading a line bundle on a fiber to a line bundle over the entire total space of the family. This can be done whenever the relative Picard scheme is smooth over its base. 

\begin{thm}\label{spreadPic}
Let \(f\colon \Xcal \rightarrow B\) be a smooth projective morphism with integral fibers with \(B\) smooth connected quasi-projective over a field \(k\), and assume \(h^2(\Xcal_b, \Ocal_{\Xcal_b})=0\) for all \(b\in B\).  Let \(0 \in B\) be a point and \(L \in \Pic_{\Xcal_0}\).
Then there exists a generically finite, surjective morphism \((B',0') \xrightarrow{\tau} (B,0)\) with \(B'\) smooth connected and a line bundle \(\Lcal\) on \(\Xcal_{B'} = \Xcal \times_B B'\) such that  \(\Lcal_{|\Xcal_{0'}}\simeq L\).
\end{thm}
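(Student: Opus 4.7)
The plan is to exploit the hypothesis \(h^2(\Xcal_t,\Ocal_{\Xcal_t})=0\) through the classical fact that it makes the relative Picard scheme smooth over \(B\), and then to carve out a generically étale multisection through the point \([L]\). Since \(f\) is smooth projective with geometrically integral fibers, classical representability results (Grothendieck, Kleiman) give that the fppf-sheafified relative Picard functor \(\Pic_{\Xcal/B}\) is represented by a \(B\)-scheme decomposing as a disjoint union of quasi-projective components \(P^{\tau}\) indexed by numerical classes. Standard deformation theory identifies the tangent space to the fiber of \(\Pic_{\Xcal/B}\to B\) at a line bundle \(M\) on \(\Xcal_t\) with \(H^{1}(\Xcal_t,\Ocal_{\Xcal_t})\), and shows that the obstructions to deforming \(M\) infinitesimally lie in \(H^{2}(\Xcal_t,\Ocal_{\Xcal_t})\). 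Under our hypothesis the latter vanishes fibrewise, so \(\Pic_{\Xcal/B}\to B\) is formally smooth and hence smooth. Let \(P\) be the connected component containing \([L]\): then \(P\) is smooth over \(B\) of relative dimension \(d:=h^{1}(\Xcal_0,\Ocal_{\Xcal_0})\), and \(P\to B\) is flat and open, hence dominant.

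The second step is to extract the desired multisection. I would choose a relatively projective compactification \(P\hookrightarrow \bar P\) over \(B\) together with an ample line bundle on \(\bar P\), and then apply a Bertini-type argument: for \(d\) sufficiently general hypersurfaces in a high multiple of the ample linear system, each constrained to contain \([L]\), their intersection \(Z\subset\bar P\) is smooth at \([L]\), of pure relative dimension \(0\) over \(B\) in a neighborhood of that point, and étale over \(B\) there. Taking the irreducible component of \(Z\cap P\) through \([L]\) and resolving its singularities produces a morphism \(B'\to B\) which is projective and generically étale, with \(B'\) smooth connected and a distinguished preimage \(0'\) of \(0\) lying over \([L]\).

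It remains to upgrade the resulting morphism \(B'\to \Pic_{\Xcal/B}\) to an actual line bundle \(\Lcal\) on \(\Xcal_{B'}\) with \(\Lcal_{|\Xcal_{0'}}\simeq L\). This is where I expect the only substantial subtlety to lie: a \(B'\)-point of the Picard scheme only produces a line bundle étale-locally on \(B'\) unless the functor has been rigidified via a chosen section of \(f\). The standard fix is to perform a further generically étale projective base change, after which \(\Xcal\to B\) acquires a section (built from a relative hyperplane-section multisection of \(\Xcal\to B\), chosen so that the fiber over \(0'\) still contains a point of the section). On this cover one may work with the rigidified Picard scheme and its Poincaré line bundle on \(\Xcal\times_B P\), whose pullback yields \(\Lcal\). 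Each of the two base changes preserves the demanded properties (smooth connected, projective, generically étale), so the final map retains them. I therefore expect the bulk of the technical work to be precisely this rigidification/bookkeeping step, since the smoothness of \(\Pic_{\Xcal/B}\) and the existence of a multisection through \([L]\) are very classical.
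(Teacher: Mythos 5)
Your proposal is correct and shares its backbone with the paper's proof: both arguments rest on the existence of the relative Picard scheme for a smooth projective morphism with integral fibers and on its smoothness over \(B\), which is exactly where the hypothesis \(h^2(\Xcal_t,\Ocal_{\Xcal_t})=0\) enters (the paper simply cites \cite[Theorem 9.4.8, Proposition 9.5.19]{fga} rather than redoing the obstruction-theory computation). Where you diverge is in the production of the multisection through \([L]\): the paper lifts the point \([L]\) to a section of \(\Pic_{\Xcal/B}\) over the Henselization of \(\Ocal_{B,0}\) using the infinitesimal lifting property against smooth morphisms, spreads this out to a quasi-finite étale neighbourhood \(U\to B\) of \(0\), and then takes a Nagata compactification \(U\subset B'\to B\); you instead compactify the relevant quasi-projective component of \(\Pic_{\Xcal/B}\) relatively over \(B\) and slice by \(d=h^1(\Xcal_0,\Ocal_{\Xcal_0})\) general ample hypersurfaces through \([L]\), checking transversality to the fiber \(P_0\) at \([L]\) to get étaleness there. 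Both routes deliver the same conclusion with comparable effort (and both, it should be said, lean on a resolution or smoothing step to make \(B'\) smooth, which neither writeup belabours). The one place where your argument is genuinely more careful than the paper's is the final rigidification step: the paper asserts that a section of \(\Pic_{\Xcal_{B'}/B'}\to B'\) through \([L]\) "is" the desired line bundle, silently passing over the Brauer obstruction to representing a point of the sheafified Picard functor by an honest line bundle; your extra base change to acquire a section of \(\Xcal\to B\) and invoke the Poincaré bundle closes that gap explicitly (alternatively, one can note that the obstruction already dies over the Henselization, where \(f\) acquires a section by smoothness, and then spread out line bundles rather than Picard points). So the proposal stands as a valid, slightly more self-contained alternative proof.
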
 
\begin{proof}
This is well known, following from the fact that by \cite[Theorem 9.4.8]{fga} the relative Picard scheme \(\Pic_{\Xcal/B}\xrightarrow{\pi} B\) exists and by  \cite[Proposition 9.5.19]{fga} it is smooth. 
Finding the line bundle \(\Lcal\) as requested amounts to finding a basechange \(B' \rightarrow B\) such that \(\pi_B' \colon  \Pic_{\Xcal_{B'}/B'} \rightarrow B'\) has a section passing through the fiber of the projection \(\Pic_{\Xcal_{B'}/B'} \rightarrow \Pic_{\Xcal/B}\) above the point \(L \in \Pic_{\Xcal/B}\). 
This can be done, for example by looking at the Henselianization of the local ring at \(0 \in B\), from which one can lift morphisms along any smooth morphism. This lifting property then holds for a smaller, quasi-finite étale neighbourhood of \(U \rightarrow B\) of \(0\in B\), and one concludes by passing to a Nagata compactification \(U \subset B' \rightarrow B\), since a flat finitely presented proper map between integral schemes is surjective. 
\end{proof}

\subsection{Effectivity of Cartier divisors in a family}
By the previous section we can extend (up to basechange) a line bundle \(L\) on one fiber \(\Xcal_0\) to a line bundle \(\Lcal\) over the entire domain \(\Xcal\). What is not at all clear a priori is whether or not the restrictions \(\Lcal_{|\Xcal_b}\) will be effective in case \(L\) was effective. Luckily, even more is true in the setting we are interested in, namely that of families of smooth Fano manifolds.

\begin{thm}\label{invariance_eff}
 Let \(\Xcal \rightarrow B\) be  smooth family of Fano varieties, with \(B\) a smooth curve.
There exists a smooth curve \(B'\) and a surjective morphism \(B' \rightarrow B\) such that, setting \(\Xcal':=\Xcal\times_{B}B'\), for any \(\Lcal \in \Pic(\Xcal')\) one has that \(h^0(\Xcal'_s, \Lcal_{|\Xcal'_s})\) is independent of \(s\in B\). 
In particular,  for any \(s,t \in B\), if \(\Lcal_{|\Xcal_s}\) defines an effective divisor then so does \(\Lcal_{|\Xcal_t}\). 
\end{thm}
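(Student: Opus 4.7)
The strategy is to reduce the theorem to the deformation invariance of the effective cone of Fano varieties, which is the nontrivial content provided in increasing generality by \cite{Siu2002}, \cite{deFernex_Hacon2011} and \cite{Totaro2012}. Theorem \ref{spreadPic} plays the role of producing the basechange over which this invariance can be meaningfully compared across fibers.

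I first set up the basechange. Since each fiber is Fano, $h^2(\Xcal_s,\Ocal_{\Xcal_s})=0$ and $\Pic(\Xcal_s)=\NS(\Xcal_s)$ is finitely generated. Applying Theorem \ref{spreadPic} successively to a finite set of generators of $\Pic(\Xcal_0)$ and composing the resulting basechanges produces a surjective generically étale morphism $B'\to B$ with $B'$ a smooth curve over which every line bundle on $\Xcal_0$ extends to $\Xcal':=\Xcal_{B'}$. Up to enlarging $B'$ to absorb any further finite cover required by the cited invariance, I obtain a setup in which, for any $\Lcal\in\Pic(\Xcal')$, the restrictions $\Lcal_{|\Xcal'_s}$ have classes that correspond across fibers via the (now trivialized) local system structure on $R^2 f'_*\Z$.

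I then invoke the effective-cone invariance: the cone of effective classes in $N^1(\Xcal'_s)$ is independent of $s\in B'$. This immediately yields the \virgolette in particular" clause. For the stronger claim that $h^0(\Xcal'_s,\Lcal_{|\Xcal'_s})$ is constant, I combine this with the constancy of the Euler characteristic $\chi(\Xcal'_s,\Lcal_{|\Xcal'_s})$ in the flat family, upper semicontinuity of $h^0$, and fiberwise Kodaira-type vanishing, which in the Fano case is controlled by the ample cone (also deformation invariant). The main obstacle is the effective-cone invariance itself, imported as a black box from the cited papers; the remainder is essentially bookkeeping with basechanges, semicontinuity, and flatness, with the Fano condition entering crucially both through $h^{2,0}=0$ (rigidity of $\Pic$ making the basechange setup work) and through the hypotheses of the cited invariance results.
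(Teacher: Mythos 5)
There is a genuine gap in your proposal, and it sits exactly where the real content of the theorem lies. The paper's own proof is a one-line citation: the statement \emph{is} (a special case of) \cite[Theorem 6.1]{Totaro2012}, which asserts directly that for a smooth family of Fano varieties over a connected base and \emph{any} line bundle on the total space, $h^0$ of the restrictions to fibers is independent of the fiber; the earlier works of Siu and de Fernex--Hacon prove the cases needed here. Your plan instead imports only the deformation invariance of the effective cone as a black box and tries to upgrade it to constancy of $h^0$. This gets the logical order backwards --- the effective-cone invariance is a \emph{corollary} of the $h^0$-invariance (or at least of the invariance of effectivity), not an ingredient from which it can be recovered --- and the proposed upgrade does not work. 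Your bootstrap ``constancy of $\chi$ $+$ upper semicontinuity of $h^0$ $+$ fiberwise Kodaira-type vanishing'' requires $H^i(\Xcal'_s,\Lcal_{|\Xcal'_s})=0$ for $i>0$ on every fiber, but Kodaira--Kawamata--Viehweg vanishing on a Fano $X$ only kills $H^{>0}(X,L)$ when $L-K_X$ is nef and big, i.e.\ essentially when $L$ is nef. The theorem quantifies over \emph{all} line bundles on $\Xcal'$, and an arbitrary (even effective) line bundle on a Fano can have nonvanishing higher cohomology: on the blow-up of $\proj^2$ at a point, $L=\Ocal(mE)$ has $h^0=1$ while $\chi(L)\to-\infty$ as $m\to\infty$, so $h^1\neq 0$. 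For such bundles, constancy of $\chi$ plus semicontinuity of $h^0$ gives nothing, and the invariance of $h^0$ is precisely the hard analytic/algebraic content (Siu-type extension theorems, multiplier ideals) of the cited papers, not something recoverable by bookkeeping.

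Two smaller remarks. First, the ``in particular'' clause (preservation of effectivity) does follow from effective-cone invariance, so that part of your argument is fine --- but it is the weaker half of the statement. Second, your basechange construction via Theorem \ref{spreadPic} applied to generators of $\Pic(\Xcal_0)$ is not what the theorem's quantifier requires: the statement concerns line bundles on the total space $\Xcal'$, for which Totaro's theorem applies with no basechange at all; the basechange in the statement exists so that the theorem can be \emph{combined} with Theorem \ref{spreadPic} (extending bundles from a fiber) in the proof of Theorem \ref{thm main}, and in any case extending generators of $\Pic(\Xcal_0)$ only handles the fiber over $0$, not all fibers simultaneously.
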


\begin{proof}
This is an immediate application of the much more general theorem \cite[Theorem 6.1]{Totaro2012}. As the author himself mentions, the case of his result we need here is less general and was already present in the literature  beforehand, see the works \cite{deFernex_Hacon2011} and \cite{Siu2002}.
\end{proof}

The invariance of dimensions of linear system as their line bundle deforms is a very strong property of families of Fano manifolds, which allows us to obtain two important corollaries. The first one was suggested to us by C. Voisin:

\begin{cor}\label{cor_irred_choice_deforms}
We put ourselves in the same setting as Theorem \ref{invariance_eff}. If \(|\Lcal_{|\Xcal_s}|\) admits a prime member, then so does \(|\Lcal_{|\Xcal_t}|\).
\end{cor}
\begin{proof}
Note that \(|\Lcal_{|\Xcal_s}|\) admits a member which is a prime divisor if and only if, for any two nontrivial effective line bundles \(L_1,L_2\) on \(\Xcal_{s}\) such that \(L_1\otimes L_2\simeq \Lcal_{|\Xcal_s}\), the multiplication map
\[\mu_{L_1,L_2} \colon |L_1|\times |L_2| \to |\Lcal_{|\Xcal_s}|\]
fails to be surjective. Indeed, this condition is clearly necessary as any prime divisor should lie outside the image of each \(\mu_{L_1,L_2}\). Conversely, if all the countably many Zariski-closed subsets \(\textup{im} (\mu_{L_1,L_2})\) were proper subsets of \(|\Lcal_{|\Xcal_s}|\), then by the theorem of Baire their union cannot be the whole \(|\Lcal_{|\Xcal_s}|\), hence there exists one member of \(|\Lcal_{|\Xcal_s}|\) which cannot be written as the sum of two nontrivial effective divisors.

We now remark that the morphism \(\mu_{L_1,L_2}\) has finite fibers, as there are only finitely many ways to write a fixed effective divisor as the sum of two effective subdivisors. Then, the surjectivity of \(\mu_{L_1,L_2}\) is equivalent to the dimensional condition
\begin{equation}\label{eq_condition_for_effectivity}  h^0(L_1)+h^0(L_2)-2=h^0(\Lcal_{|\Xcal_s})-1.
\end{equation}
We now prove the claim of the Corollary: suppose no element of \(|\Lcal_{|\Xcal_t}|\) is prime. Then there exists \(L_1,L_2 \in \Pic(\Xcal_t)\) such that \(L_1\otimes L_2\simeq \Lcal_{|\Xcal_t}\) and \eqref{eq_condition_for_effectivity} is satisfied. Up to basechange, \(L_1\) and \(L_2\) extend into families \(\Lcal_1,\Lcal_2\). 
By Theorem \ref{invariance_eff}, \eqref{eq_condition_for_effectivity} is still satisfied for \(\Lcal_{1|\Xcal_s}, \Lcal_{2|\Xcal_s}\). Since a line bundle is trivial if and only if both itself and its dual have a global section, which is a property that deforms well by Theorem \ref{invariance_eff}, we deduce that the line bundle \(\Lcal_1\otimes \Lcal_2\otimes \Lcal^\vee\) is trivial on all fibers as it was above \(t\) by hypothesis. 
In particular \(\Lcal_{1|\Xcal_s}\otimes \Lcal_{2|\Xcal_s}\simeq \Lcal_{|\Xcal_s}\), which by the above characterization proves \(|\Lcal_{|\Xcal_s}|\) admits no prime members.

\end{proof}
\color{black}

\begin{cor}\label{cor_spread_div}
Let \(\Xcal \rightarrow B\) be  smooth family of Fano varieties, with \(B\) a smooth curve. Let \(\Lcal\) be a line bundle on \(\Xcal\). For any two points \(b_1,b_2 \in B\) and two divisors \(D_i \in |\Lcal_{|\Xcal_{b_i}}|\), there exists a generically finite, surjective morphism \(B'\to B\) such that \(\Xcal'\coloneqq\Xcal \times_B B'\) admits a divisor \(\Dcal\subset \Xcal'\) flat over \(B'\) which satisfies \(\Dcal_{|X_i}=D_i\) for \(i=1,2\). 
\end{cor}

\begin{proof}
We consider the functor \(LinSym_{\Lcal/\Xcal/B}\) on \(B\)-schemes, whose \(T\)-points parametrize 
relative Weil divisors \(\Dcal \subset \Xcal\times_B T\) such that \(\Dcal_{|\Xcal_t}\in |\Lcal_{\Xcal_t}|\) for all \(t\in T\) (see \cite[Section 9.3]{fga} for more details).
By \cite[Theorem 9.3.13]{fga}, \(LinSym_{\Lcal/\Xcal/B}\) is represented by a projective bundle \(\pi \colon P\to B\), whose fiber over \(b\in B\) is the linear system \(|\Lcal_{|\Xcal_b}|\). 
Note that by Theorem \ref{invariance_eff}, this linear system has constant dimension as \(b\) varies in \(B\) (possibly after basechange). 
Then \(\pi \colon P\to B\) is a proper surjective morphism over an irreducible variety with fibers that are irreducible of constant dimension \(d\), hence \(P\) itself is irreducible (this is given as an exercise in \cite[12.4.D.]{rising_sea}): the idea is that since \(\pi\) is proper and surjective and \(B\) is irreducible, amongst the components \(P_i\) of \(P\) there must be one which we call \(P_\ell\) that dominates \(B\). One then proves that each component \(P_i\) is a union of fibers, which forces \(P_\ell=P\). Indeed, once \(i\) is fixed, by generic flatness and openness of finitely presented flat morphisms, the set \(\pi (P_i \setminus \bigcup_{j\neq i}P_j)\) contains a nonempty open \(V_i\). By irreducibility of the fibers, \(\pi^{-1}(y)\subset P_i\) for all \(y \in V_i\). Recall now that for any \(y \in \pi(P_i)\), one has
\[\dim \pi_{|P_i}^{-1}(y)\geq \dim P_i  - \dim \pi(P_i)\]
and that for some nonempty open \(W_i\subset B\) and any \(y \in W_i\), the above is an equality. Then for any \( y \in V_i\cap W_i\) we have \(\pi_{|P_i}^{-1}(y)=\pi^{-1}(y)\) and thus for any other \(y' \in \pi(P_i)\) we get
\[d =\dim \pi^{-1}(y)= \dim P_i - \dim \pi(P_i)\leq \dim \pi_{|P_i}^{-1}(y')\leq \pi^{-1}(y')=d.\]
By irreducibility of fibers this forces \(\pi_{|P_i}^{-1}(y')=\pi^{-1}(y')\) and concludes.

Now that we know \(P\) is irreducible, we can finish the proof of the Corollary by the well known fact (\cite[Example 3.3.5]{laz}) that there is an irreducible curve passing through any two points of a quasi-projective irreducible variety; applying this to \(P_{red}\), we deduce that there is a smooth proper curve \(B' \to P\) passing through the points \(D_1,D_2 \in P\). Composing this with \(P\to B\) defines the basechange \(B'\to B\) we are after and completes the proof.
\end{proof}
\subsection{Invariance of \texorpdfstring{\(\delta_X\)}{d_X}   for Fano families}
In this section we prove the main Theorem \ref{thm main}, namely that the Lefschetz Defect of a smooth Fano variety is invariant under smooth deformation.

Before beginning the proof, recall that for any complex manifold  \(X\) there exists a natural map
\[\Pic(X) \rightarrow H^2(X,\Z).\]
which sends \(L\) to its first chern class \(c_1(L)\). When \(X\) compact, for any \(D\) effective Cartier divisor one can define \([D]\) as discussed in Section \ref{subsec_another_char}: then one has that \([D]=c_1(\Ocal_X(D))\).

\begin{proof}[Proof of Theorem \ref{thm main}]
It suffices to show \(\delta_{X_1}\leq \delta_{X_2}\). By definition of \(\delta_{X_1}\), there exists a prime divisor \(D_1 \subset X_1\) such that \(\delta_{X_1}(D_1)=\delta_{X_1}\). 
Let \(\Xcal \rightarrow B\) be a smooth family and \(b_1, b_2 \in B\) such that \(\Xcal_{b_i}= X_i\).
Up to basechanging, by Theorem \ref{spreadPic} there exists a line bundle \(\Lcal\) on \(\Xcal\) such that \(\Lcal_{|X_i}=\cO_{X_1}(D_1)\). By Corollary \ref{cor_irred_choice_deforms}, since \(D_1\) is prime we may choose \(D_2 \in |\Lcal_{|X_2}|\) to be prime too. Up to another basechange, by Corollary \ref{cor_spread_div} we may pick a relative divisor \(\Dcal\subset \Xcal\) over \(B\) such that \(\Dcal_{|X_1}= D_1\), and \(\Dcal_{|X_2}=D_2\). Now by Corollary \ref{cor_another_N1defect} we have
\[\delta_{X_1}(D_1)=\dim K_{b_1}(D_1) \ \ \ \ , \ \ \ \ \delta_{X_2}(D_2)=\dim K_{b_2}(D_2),\]

where for every \(s\in B\) and \(D\subset \Xcal_s\) we defined
\[K_s(D)\coloneqq \Ker (H^2(\Xcal_s,\Q)\xrightarrow{\smallsmile [D]} H^4(\Xcal_s,\Q)).\]
Now we only need to understand how \(\dim K_s (\Dcal_s)\) varies as a function of \(s\in B\).
\color{black}
Fix some \(t \in B\). For a small enough \(U \subset B\) contractible analytic neighbourhood of \(t\), the submersion \(\Xcal_U \rightarrow U\) is \(C^\infty\)-trivial. In particular the restriction morphism \(H^2(\Xcal_U,\Q) \rightarrow H^2(\Xcal_s,\Q)\) is an isomorphism for all \(s \in U\). By the functoriality of taking first chern classes of line bundles on complex manifolds,
\(c_1(\Ocal(\Dcal))_{|\Xcal_s}=c_1(\Ocal(\Dcal_{|\Xcal_s}))=[\Dcal_{|\Xcal_s}]\) for all \(s \in U\). In particular, for any \(s \in U\) we have the following commutative diagram
\[\begin{tikzcd}
	{H^2(\Xcal_{t},\Q)} & {H^2(\Xcal_U,\Q)} & {H^2(\Xcal_s,\Q)} \\
	\\
	{H^4(\Xcal_{t},\Q)} & {H^4(\Xcal_U,\Q)} & {H^4(\Xcal_s,\Q)}
	\arrow["{ \smallsmile [\Dcal_{|\Xcal_{t}}]}"{description}, from=1-1, to=3-1]
	\arrow["\sim"', from=1-2, to=1-1]
	\arrow["\sim", from=1-2, to=1-3]
	\arrow["{ \smallsmile c_1(\Ocal_{\Xcal_U}(\Dcal_{|U}))}"{description}, from=1-2, to=3-2]
	\arrow["{ \smallsmile [\Dcal_{|\Xcal_s}]}"{description}, from=1-3, to=3-3]
	\arrow["\sim", from=3-2, to=3-1]
	\arrow["\sim"', from=3-2, to=3-3]
\end{tikzcd}.\]
The above diagram implies that \(K_t(\Dcal_{|\Xcal_t})\) and \(K_s(\Dcal_{|\Xcal_s})\) are identified via the isomorphism in cohomology induced by the diffeomorphism of the fibers \(\Xcal_{t} \simeq \Xcal_s\). 
Concatenating finitely many such local trivializations, we deduce 
\[\dim K_{b_1}(D_1)=\dim K_{b_2}(D_2).\]
Putting everything together, we conclude
\begin{equation*}\delta_{X_1}=\delta_{X_1}(D_1)=\dim K_{b_1}(D_1) = \dim K_{b_2}(D_2) = 
\delta_{X_2}(D_2)\leq \delta_{X_2} .\qedhere
\end{equation*}
\end{proof}

\section{Lefschetz defect on abelian varieties}\label{sec_defect_abelian}

In this section, we study the number \(\delta_A\) in the case \(A\) is an abelian variety. In particular, for any effective divisor \(D\subset A\), we observe that the number \(\delta_A(D)\) behaves in a way that heavily depends on the \textit{Iitaka dimension} of the line bundle \(\Ocal_A(D)\). 
\accapino

Recall the definition of Iitaka dimension: for \(L\) a line bundle on a normal algebraic variety \(X\),
\[\kappa(X,L):=\mathrm{min} \left\{d\in \N \; \bigg| \  \frac{h^0(X,L^{\otimes m})}{m^d} \ \mathrm{bounded \  as} \ m \rightarrow \infty \right \}.\]

If we denote by \(\phi_{L^{\otimes m}} : X \dashrightarrow \proj(H^0(X,L^{\otimes m}))\) the rational map induced by the linear system \(|L^{\otimes m}|\), and by \(W_{L^{\otimes m}}\subset \proj(H^0(X,L^{\otimes m})\) the closure of \( \mathrm{Im} \phi_{L^{\otimes m}}\), then 
\begin{equation}\label{eq kodaira dimension}
\kappa (X,L)= \dim W_m \ \ \ \mathrm{for } \ \ m>>0 .
\end{equation}

Let us fix a complex abelian variety \(A\) of dimension \(n\). Then the Iitaka dimension of the line bundle \(\Ocal_A(D)\) for \(D\subset A\) an effective divisor  manifests in a very explicit geometric way, which we now recall.
It is a remarkable basic fact about abelian varieties that for any effective divisor \(D \subset A\), the linear system \(|2D|\) is basepoint-free. Moreover, if 
\[A \xrightarrow{\varphi} B  \xrightarrow{\psi} \proj(|2D|)\] is the Stein factorization of \(\phi_{2D}\), then \(B\) itself is an abelian variety (by Proposition, page 88 of \cite{mumford_book}). Since \(\psi\) is finite, by (\ref{eq kodaira dimension}) we have
\[\kappa (A,\Ocal(D))=\dim B.\]
We now discuss what is the link between \(\delta_A(D)\) and \(\dim B\). This analysis will also show that the finer datum of the isogeny factors of \(B\) and their multiplicity in \(A\) determines \(\delta_A(D)\) even more precisely.

Recall that for any complex manifold \(X\), \(\NS(X)\) denotes the Neron-Severi lattice, i.e., the image of the first Chern class map \(\Pic(X)\to H^2(X,\Z)\). If \(X\) is smooth projective, then \(\NS(X)\otimes \Q\simeq N^1(X)_{\Q}\) as homological equivalence and numerical equivalence coincide up to torsion (\cite[Corollary 1.4.38]{laz}).

\begin{prop}\label{prop defect of D abelian}
Let \(A\) be an abelian variety, \(D\subset A\) a prime divisor and \(A \rightarrow B\) constructed as above.  Set \(b := \mathrm{dim} \;B\). 
Then we have one of the following cases:

\begin{itemize}
    \item If \(b\geq 3\), then \(\delta_A(D) =0\).
    \item If \(b=2\), then \(\delta_A(D) =\rho_B -1 \in \{0, \dots, 3\}\).
    \item If \(b=1\), meaning \(B\) is an elliptic curve which appears with multiplicity \(k\geq 1\) in the factorization up to isogeny of \(A\), then  
    
    \begin{equation}\label{eq CM cases}\delta_A(D)=
    \begin{cases}k & \mathrm{\ if } \ B \textrm{ does not have CM} \\
    2k-1 &  \mathrm{ \ if \ } B \textrm{ has CM } \end{cases}. \end{equation} 
\end{itemize}

\end{prop}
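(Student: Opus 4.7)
The starting point is Corollary \ref{cor_another_N1defect}, which lets us compute
\[\delta_A(D)=\dim \Ker\bigl(N^1(A)_\Q\xrightarrow{\smallsmile [D]} H^4(A,\Q)\bigr).\]
The key structural input is the Stein factorization \(A\xrightarrow{\varphi}B\xrightarrow{\psi}\proj(|2D|)\): since \(\psi\) is finite, \(H:=\psi^*\Ocal(1)\) is ample on \(B\) and \(\varphi^*H=2D\), so \([D]=\tfrac12\varphi^*[H]\) in \(H^2(A,\Q)\). Up to translation \(\varphi\) is a homomorphism of abelian varieties, and by Poincaré complete reducibility there is an isogeny \(A\sim A'\times B\), where \(A'=(\Ker\varphi)^0\); under the induced K\"unneth isomorphism \(H^*(A,\Q)\simeq H^*(A',\Q)\otimes H^*(B,\Q)\) the class \([D]\) becomes \(\tfrac12(1\otimes [H])\).

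Next I would decompose \(H^2(A,\Q)\) via K\"unneth into the three summands
\[H^2(A')\otimes H^0(B),\quad H^1(A')\otimes H^1(B),\quad H^0(A')\otimes H^2(B),\]
and analyze the cup product with \(1\otimes [H]\) on each. The first map is automatically injective. For the second and third, Hard Lefschetz on \(B\) controls injectivity of \(\smallsmile [H]\colon H^1(B)\to H^3(B)\) and \(H^2(B)\to H^4(B)\): the former is injective iff \(b\geq 2\), the latter iff \(b\geq 3\). This immediately gives the three regimes \(b\geq 3\), \(b=2\), \(b=1\) corresponding to the statement.

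To land inside \(N^1(A)_\Q\), I would use the standard decomposition
\[NS(A)_\Q\simeq NS(A')_\Q\oplus \Hom(A',B^\vee)_\Q\oplus NS(B)_\Q,\]
whose three summands lie respectively in the three K\"unneth pieces above. For \(b\geq 3\) the three summands inject, giving \(\delta_A(D)=0\). For \(b=2\) the kernel meets only \(NS(B)_\Q\), in the subspace \([H]^\perp\) of dimension \(\rho_B-1\). For \(b=1\) the second and third K\"unneth summands are killed entirely, so \(\delta_A(D)=\dim_\Q \Hom(A',B^\vee)_\Q+1\); using the principal polarization \(B=E\simeq E^\vee\), and that by hypothesis \(A\sim E^k\times C\) with \(E\) not an isogeny factor of \(C\), one obtains \(A'\sim E^{k-1}\times C\), whence \(\Hom(A',E)_\Q=\End(E)_\Q^{k-1}\). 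This has dimension \(k-1\) or \(2(k-1)\) depending on CM, yielding \(\delta_A(D)=k\) or \(2k-1\) as claimed.

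The main technical point to handle carefully is the identification of \(A'\) with the correct isogeny factor: one must verify that the kernel of the Stein factorization \(\varphi\) removes exactly one copy of \(E\) from the isogeny decomposition, i.e.\ that \(A'\sim E^{k-1}\times C\). This follows because any homomorphism \(C\to E\) vanishes by the hypothesis on isogeny factors, so \(\varphi\) factors through the projection \(A\to E^k\) followed by a surjective homomorphism \(E^k\to E\), whose kernel is isogenous to \(E^{k-1}\); then \(A'\) inherits the remaining factor \(C\) through the isogeny \(A\sim E^k\times C\). Once this is pinned down, the rest is bookkeeping on the K\"unneth and \(NS\) decompositions; the rest of the argument is essentially algebraic, so I expect no further obstacle.
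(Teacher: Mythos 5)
Your proposal is correct and follows essentially the same route as the paper: reduce via Poincar\'e reducibility to a product isogenous to $A$, split $\NS$ by K\"unneth into $\NS$ of the factors plus a $\Hom$ term, apply Hard Lefschetz on the base $B$ to sort out the regimes $b\geq 3$, $b=2$, $b=1$, and compute $\Hom(A',E)_\Q\simeq\End(E)_\Q^{k-1}$ for the CM dichotomy. The only point you leave untouched, which the paper does address, is why $b=0$ cannot occur (so that the three listed cases are exhaustive); this follows either from the bound $\delta_A(D)\leq\rho_A-1$ of \eqref{range_for_defect} or from the fact that $\Ker\varphi$ is a component of the proper subgroup $H(D)$.
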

\begin{proof}
Note that by the discussion preceeding (\ref{remark alldivisors}) one has \(\delta_A(D)=\delta_A(2D)\), so up to substituting \(D\) with \(2D\), we may suppose that \(D\) is basepoint-free. This will not pose any problems later on  when we try to apply \eqref{eq another N1 defect}, by Remark \ref{rk_thanks_referee}.
Set \(C:= \Ker \, \varphi\), where \(\varphi\) is the first factor in the Stein factorization of the map associated to \(|D|\), as in the introduction to this Section. Then \(\varphi\) has connected fibers, so \(C\) is a connected abelian subvariety of \(A\). By the Poincaré splitting theorem, there exists another abelian subvariety \(T\subset A\) and an isogeny \(\mu\colon C\times T \rightarrow A\) fitting in the commutative diagram
\[\begin{tikzcd}
	{C\times T} & A \\
	T & B & {\proj(|D|)}
	\arrow["\mu", from=1-1, to=1-2]
	\arrow["{pr_T}"', shift right, from=1-1, to=2-1]
	\arrow["\varphi", from=1-2, to=2-2]
	\arrow[shift right, hook', from=2-1, to=1-1]
	\arrow["{(\varphi\circ \mu)_{|T}}"', from=2-1, to=2-2]
	\arrow["\psi"', from=2-2, to=2-3]
\end{tikzcd}.\]
In particular, the lower horizontal map is an isogeny as well, hence the pullback along finite maps \(N:= (\varphi \circ \mu)_{|T}^* \psi^* \Ocal_{\proj}(1)\) is an ample line bundle on \(T\), and 
\[pr_T^* N = \mu^* \varphi^* \psi^*\Ocal_{\proj(|D|)}(1)=\mu^* \Ocal_A(D).\]
In the commutative diagram
\[\begin{tikzcd}
	{N^1(C\times T)_\R} && {N^1(A)_\R} \\
	{H^4(C\times T ,\R)} && {H^4(A ,\R)}
	\arrow["{\smallsmile pr^*[N]}"', from=1-1, to=2-1]
	\arrow["{\mu^*}"', from=1-3, to=1-1]
	\arrow["{\smallsmile [D]}"', from=1-3, to=2-3]
	\arrow["{\mu^*}"', from=2-3, to=2-1]
\end{tikzcd},\]
the horizontal arrows are isomorphisms since induced by an isogeny, which has an inverse up to multiplication morphisms (\cite[Proposition 1.2.6]{birlan2004}). Then the two vertical arrows have isomorphic kernels, hence by (\ref{eq another N1 defect}) we get
\[\delta_A(D)=\dim K \mathrm{ \ \ where \ \ } K\coloneqq\Ker \Big( N^1(C\times T)_\R \xrightarrow{\smallsmile pr^*[N]} H^4(C\times T ,\R) \Big).\]

In other words, we reduced the computation of \(\delta_A(D)\) to the case where \(A\) is a product and \(\varphi\) is the projection onto one factor.

By the Künneth formula, we have the splitting
\begin{equation}\label{eq H2 of product}
    H^2(C\times T) = H^2(C) \oplus H^1(T)\otimes H^1(C) \oplus H^2(T) ,
\end{equation}
Recall that for \(M,N\) two \(\Z\)-Hodge structures of weight \(k\), the \((0,0)\) part of the natural weight \(0\) Hodge structure on \(\Hom_\Z(M,N)\) is the subgroup of morphism of Hodge structures \(\Hom_{\Z-HS}(M,N)\). For this reason, by Lefschetz \((1,1)\) theorem and by the correspondence between tori and weight \(1\) \(\Z\)-Hodge structures, taking the part of type \((1,1)\) of the above equality yields

\begin{equation}\label{eq N1 of product}
    \NS(C\times T) = pr_C^*\NS(C) \oplus \Hom(C^\vee, T) \oplus pr_T^*\NS(T) ,
\end{equation}

Since the map \(\smallsmile pr_T^* [N]\) preserves the Künneth decompositions  of domain and codomain, we can write \(K\) as
\[K= (K\cap pr_C^*NS(C)_\R) \oplus (K\cap \Hom(C^\vee, T)_\R )\oplus (K\cap pr_T^*NS(T)_\R).\]

But by Hard Lefschetz for the pair \((T,[N])\), we have some understanding of the dimension of each summand: since \(\dim T = \dim B= b\), and \(N\) is ample on \(T\), the map \[\smallsmile [N]: H^2(T,\R) \rightarrow H^4(T,\R)\] is injective as soon as \(b \geq 3\). Similarly, \(H^1(T,\R) \rightarrow H^3(T,\R)\) and \(H^0(T,\R) \rightarrow H^2(T,\R)\) are injective for \(b \geq 2\) and \(b\geq 1\) respectively.
Since all morphisms \(pr^*_T\) are injective, we get immediately that
\[b\geq3 \Rightarrow \dim_\R K =0,\]
\[b=2 \Rightarrow \dim K = \rho_T -1 .\]

Now let us suppose \(b=1\), meaning \(T\) is an elliptic curve. If \(O\in T\) denotes the neutral element of \(T\), we have \([C]= \varphi^* [O]\) as codimension 1 cycles on \(A\). The 0-cycle \([O]\) is an ample generator of \(\Pic \, T\), and \(\psi^* \Ocal_\proj (1)=\Ocal_T(m[O])\) where \(m=\deg \psi^* \Ocal_\proj (1)>0\). In conclusion, \[\Ocal_A(D)= \phi_D^* \Ocal_\proj(1) = \varphi^* \Ocal_T(m[O])= m \varphi^*\Ocal_T([O])=m \Ocal_A(C),  \]
hence \(D\) is linearly equivalent to \(m[C]\).

In particular, \(\delta_A(D)= \delta_{C\times T}(C)\), hence by (\ref{rk retraction}) and (\ref{eq N1 of product}) we get
\[\delta_A(D)= \rho_A - \rho_C=1 + \textup{rk}(\Hom(C^\vee, T))=1+\dim_\Q (\Hom_\Q(C^\vee,T)).\]
Since every abelian variety is isogenous to its dual, if \(k_T\) denotes the multiplicity of the elliptic curve \(T\) in the factorization up to isogeny of \(A\), then we have
\[\Hom_\Q(C^\vee, T)= \Hom_\Q (T, T^{k_T-1})= (k_t-1)\End_\Q(T).\]
Since \(k_T=k_B\), we obtain the formula (\ref{eq CM cases}).

The case \(b=0\) cannot occur, since that would mean \(\delta_X(D)= \rho_A\), which cannot happen as remarked in (\ref{range_for_defect}). Alternatively, note that \(C=\Ker \varphi \) is by construction (\cite{mumford_book} page 88) a connected component of \[H(D)=\{z\in A| t_z^*D=D\},\]
where \(t_z\) is the translation by \(z\), but this subset is never the whole \(A\) since for any choice of \(x \in D\), \(y \notin D\), then \(y-x \notin H(D)\).

\end{proof}

As the above proposition shows, it turns out that \(\delta_A\) is very often reached by \(\delta_A(D)\) where \(D\) is an abelian subvariety of codimension one, apart from certain cases where \(\delta_A(D)\) is low and determined by very specific simple abelian surfaces showing up as isogeny factors of \(A\). We can now prove Theorem \ref{thm_defect_of_abelians}, which we state again for the convenience of the reader.

\begin{thm*}(Theorem \ref{thm_defect_of_abelians}).
Let \(A\) be an abelian variety of dimension \(n\). Then \(\delta_A=0\) if and only if \(A\) does not contain elliptic curves nor abelian surfaces of Picard number greater than 1. If instead \(\delta_A>0\), then exactly one of the following is verified:
\begin{itemize}
\item \(A\sim C \times E\) where \(E\) is an elliptic curve, and if \(k_E \geq 1\) is its multiplicity as an isogeny factor of \(A\) then \[\delta_A=\begin{cases}
k_E & \mathrm{ \ if \ } E \textrm{ does not have CM }\\
2k_E -1 & \mathrm{ \ if \ } E \textrm{ has CM}
\end{cases}.\]
\item \(A\sim C\times S\) where \(S\) is a simple abelian surface of type II, \(\rho_S=3\) and \[\delta_A=2.\]
Furthermore, any elliptic curve showing up as an isogeny factor of \(A\) does so with multiplicity at most one.

\item \(A\sim C\times S'\) where \(S'\) is a simple abelian surface of type I or IV, \(\rho_{S'}=2\) and \[\delta_A=1.\]  Furthermore, there are no elliptic curves contained in \(A\).
\end{itemize} 

\end{thm*}

\begin{proof}[Proof of Theorem \ref{thm_defect_of_abelians}]
The statement in the case \(\delta_A=0\) follows immediately from Proposition \ref{prop defect of D abelian}. Suppose then \(\delta_A(D)=\delta_A >0\) for some effective divisor \(D\subset A\). Then \(b\leq 2\): if \(b=1\), we fall in the first case of the present Theorem, by the third point of Proposition \ref{prop defect of D abelian}. If instead \(b=2\), we have \(\delta_A =\delta_A(\varphi^* N)\in \{1,2,3\}\) for some \(N\) ample on \(B\). If \(B\) were not simple, meaning \(B\sim E\times E'\), and if \(L\) denotes the pullback of an ample line bundle of \(E\) onto \(A\), then \(\delta_A(L)\geq \rho_B-1=\delta_A(D)=\delta_A\) so \(\delta_A(L)=\delta_A\) and we fall again in the first case of the Theorem. 

So let us suppose we are not in the first case of the Theorem, which by the above argument means that \(b=2\) and \(B\) simple. The latter implies that \(\delta_A \in \{1,2\}\); once again by \(\delta_A(\varphi^* N)=\rho_B-1\), these two possible values correspond to the second and third cases of the Theorem. The type of the simple abelian surfaces follows from the restrictions on Picard rank imposed by the type, as in \cite[Proposition 5.5.7 ]{birlan2004}. The stated obstruction to the presence of elliptic curves is due to the working assumption of us not being in the first case of the Theorem.
\end{proof}

As an example, Theorem \ref{thm_defect_of_abelians} says that if \(A\) is an abelian threefold, then surely \(\delta_A\in \{0,1,2,3,5\},\) and we will have  the following cases (\(E_{RM}\) and \(E_{CM}\) denote respectively real-multiplication and complex-multiplication elliptic curves):

\begin{itemize}
\item \(\delta_A=5\) and  \(A\sim E_{CM}^3\).
\item \(\delta_A=3\) and  \(A\sim E_{RM}\times E_{CM}^2\).
\item \(\delta_A=2\) and  either \(A\sim S_{\textrm{II}}\times E\), where \(S_{\textrm{II}}\) is a simple abelian surface of type II (hence \(\rho_{S_{\textup{II}}}=3\)), or \(A\sim E_{RM}^2\times E\) with \(E\not\sim E_{RM}\).
\item \(\delta_A=1\) and  either\(A\sim E\times E' \times E''\) with all three curves non-isogenous, or \(A\sim S\times E\) with \(S\) simple not isogenous to \( S_{\textrm{II}}\) such as above. 
\item \(\delta_A=0\) and  \(A\) simple.
\end{itemize}

\noindent\textbf{Declarations}

\noindent \textbf{Conflict of interest:} On behalf of all authors, the corresponding author states that there is no conflict of interest

\noindent \textbf{Data availability statement:} Data sharing is not applicable to this article as no new data
were created or analyzed in this study.

\color{black}
\bibliographystyle{alpha}
\bibliography{sources} 

\end{document}